\documentclass[12pt,twoside,reqno]{amsart}
\usepackage{amsfonts,amsthm,amsmath,amssymb,dsfont}
\usepackage{amsmath,enumitem}
\usepackage{graphicx}
\usepackage{color}
\usepackage[all]{xy}
\usepackage{subfigure}
\usepackage{tikz}
\usetikzlibrary[arrows,shapes,graphs,decorations.pathmorphing,backgrounds,positioning,fit,petri]
\usepackage{ amscd,   eufrak}
\usepackage{algorithm}
\usepackage{algpseudocode}
 \usepackage{amssymb}
 \usepackage{amsmath}
 \usepackage{booktabs}
 \usepackage{mathrsfs}
 \usepackage{amsfonts}
\usepackage{tabularx}
\usepackage{booktabs}
 \usepackage{verbatim}
\usepackage[colorlinks=true,citecolor=blue]{hyperref}
\usepackage{mathptmx, amsmath, amssymb, amsfonts, amsthm, mathptmx, enumerate, color}
\usepackage[a4paper,
            bindingoffset=0.2in,
            left=0.7in,
            right=0.7in,
            top=1in,
            bottom=1in,
            footskip=.25in]{geometry}
\usepackage{graphicx}
\usepackage{epstopdf}
\newtheorem{theorem}{Theorem}[section]

\newtheorem{proposition}{Proposition}[section]
\theoremstyle{definition}
\newtheorem{definition}{Definition}[section]

\numberwithin{equation}{section}

\begin{document}
\setcounter{page}{1}

\vspace*{1.0cm}
\title[Bayesian fuzzy optimization using fuzzy Gaussian process]
{Bayesian fuzzy optimization using fuzzy Gaussian process}
\author[Sourav Das, Debdas Ghosh, Debjani Chakraborty, Pabitra Mitra]{ Sourav Das$^{1,*}$, Debdas Ghosh$^2$, Debjani Chakraborty$^{1}$, Pabitra Mitra$^3$}
\maketitle
\vspace*{-0.6cm}

\begin{center}
{\footnotesize {\it

$^1$Department of Mathematics, Indian Institute of Technology, Kharagpur, India\\
$^2$Department of Mathematical Sciences, Indian Institute of Technology (BHU) Varanasi, Varanasi, India\\
$^3$Department of Computer Science and Engineering, Indian Institute of Technology Kharagpur, Kharagpur, India
}}\end{center}

\vskip 4mm {\small\noindent {\bf Abstract.}
In many real-life problems, decision-making gets complicated due to dual sources of uncertainty, known as randomness and fuzziness or imprecision, which can be challenging for traditional optimization methods. Most of the existing fuzzy optimization techniques that optimize fuzzy-valued objective functions ignore fuzziness, while the probabilistic optimization techniques ignore randomness. To handle this dual source of uncertainty, Kwakernaak introduced the concept of a fuzzy random variable as ``random variables whose values are not real, but fuzzy numbers". This work aims to derive a theoretical background for the Gaussian fuzzy process and fuzzy acquisition functions, which will be used to develop a novel \emph{Bayesian fuzzy optimization} (BFO) technique that optimizes a fuzzy-values objective function. Based on fuzzy random variables, the Gaussian fuzzy process is developed, which is used as a prior belief about the fuzzy-valued objective function in the BFO. Fuzzy acquisition functions are defined to act as a guide for the search process of BFO with the help of posterior fuzzy mean and fuzzy variance. The proposed method demonstrated effective performance in both fuzzy mean-variance portfolio allocation and Indian temperature data analysis, showing robust predictive accuracy and adaptability. The proposed method can have broader applications in various fields like healthcare, material science, agriculture, etc. \\ 

\noindent {\bf Keywords.}
Fuzzy random variables; Gaussian fuzzy random variables; Gaussian fuzzy process; Bayesian fuzzy optimization; Portfolio allocation. }

\renewcommand{\thefootnote}{}
\footnotetext{ $^*$Corresponding author.
\par
E-mail addresses: souravdas.maths@kgpian.iitkgp.ac.in (Sourav Das), debdas.mat@iitbhu.ac.in (Debdas Ghosh), \\ debjani@maths.iitkgp.ac.in (Debjani Chakraborty),  pabitra@cse.iitkgp.ac.in (Pabitra Mitra).
 }

\section{Introduction}\label{introduction}
In many real-life problems, such as financial modeling, medical diagnostic systems, supply chain management, materials science, etc., there are uncertainties that arise not only from the randomness but also from imprecision or vagueness. Randomness refers to uncertainty that arises from inherent variability or unpredictability in a system. Random variables are generally used to model this kind of uncertainty. Imprecision or vagueness refers to uncertainty that arises from a lack of clear, precise information. This type of uncertainty is often associated with situations where boundaries or definitions are not sharp, and information is ambiguous or incomplete. This type of uncertainty is typically modeled using fuzzy sets or fuzzy numbers. Fuzziness is more of an instrument of a descriptive analysis reflecting the past and its implications, whereas randomness is primarily an instrument of an analytical approach that focuses on the future \cite{shapiro2009fuzzy}. To tackle this situation, \emph{fuzzy random variable} (FRV) was first introduced by Kwakernaak \cite{kwakernaak1978fuzzy, kwakernaak1979fuzzy} as ``random variables whose values are not real, but fuzzy numbers". An expanded version of a similar model was also developed by Kruse and Meyer \cite{kruse2012statistics}. Puri et al. \cite{puri1993fuzzy} defined FRV as random fuzzy sets. The concepts of both the fuzzy mean and variance are also defined, and their properties are investigated in these works. The general concept of a fuzzy probability density is discussed extensively in \cite{viertl2011statistical}. The concept of a Gaussian fuzzy random variable and its properties were developed in \cite{feng2000gaussian}, treating FRV as a random element of the Banach space of all left continuous functions on $(0,1]$ and right continuous on $0$, from $[0,1]$ to the space of continuous functions on a unit sphere.

\par The concept of fuzzy valued functions, where the function values are fuzzy numbers or fuzzy sets rather than precise values, has been introduced by Zadeh \cite{zadeh1965fuzzy}. The idea of fuzzy-valued functions was further developed in the context of fuzzy arithmetic and fuzzy optimization by researchers such as Dubois and Prade \cite{dubois1980fuzzy}, who significantly extended fuzzy set theory into various mathematical and applied fields. The earliest work discussing the use of fuzzy sets in decision-making was by Bellman and Zadeh \cite{bellman1970decision}, which directly led to the development of fuzzy optimization. The earliest works were done by Zimmermann \cite{zimmermann1978fuzzy, zimmermann1993applications, zimmermann1975description}, where fuzzy set theory was applied to the linear programming problems and linear multi-objective programming problems where both the objective function and constraints are fuzzy. The collection of works edited in \cite{delgado1994fuzzy, slowinski2012fuzzy} reported many fuzzy optimization methods. The book \cite{lodwick2010fuzzy} provided a comprehensive presentation of some important classes of fuzzy optimization and fuzzy mathematical programming problems. Search-based metaheuristic methods have also been developed for fuzzy optimization. For example, Buckley and Hayashi \cite{buckley1994fuzzy} introduced a fuzzy genetic algorithm to approximately solve fuzzy optimization problems. A fuzzy particle swarm optimization algorithm was proposed by Tian and Li \cite{5158990}, where inertia weights and learning coefficients are adjusted during the search process based on information given by a fuzzy logic controller.

\par Most of the existing work discussed in the previous paragraph concerning fuzzy optimization only considers the uncertainty due to imprecision or vagueness but ignores uncertainty due to randomness. An early work on Bayesian fuzzy inference was done by \cite{635347}, but this field was under-discovered until Viertl and Sunanta  \cite{viertl2013fuzzy} did a work with the same name, where the authors introduced the fuzzy form of a-priori densities, where the prior information is fuzzy. Few works were done in the current days, like \cite{6955803, 8017463}, where they worked on Bayesian fuzzy clustering and fuzzy Bayesian learning, respectively. However, none of them actually highlight how a fuzzy valued function has to be optimized using the Bayesian approach. To address this gap, this work proposes a fuzzy optimization technique based on a Bayesian approach called Bayesian optimization. Bayesian optimization first originated from the work of Kushner \cite{ref:3}, where a Brownian motion stochastic process is assumed as prior for the objective function and then introduces the probability of improvement acquisition function for finding the location of the maximum point of an arbitrary multi-peak curve in the presence of noise. Another acquisition function, Expected improvement, is developed in \cite{ref:9}. Though these are a few early works, Bayesian optimization got more attention after the work of Jones et al. \cite{ref:6}, where the author proposed an \emph{efficient global optimization} (EGO) algorithm for expensive black-box functions. Bayesian optimization has applications in different fields, such as hyper-parameter tuning in machine learning models, robotics, sensor networks, etc. This optimization method has huge potential to be applied successfully in various fields of science and engineering.

\par This work introduces a sequential search-based fuzzy optimization technique for fuzzy-valued objective functions using Bayesian optimization. Based on the concept of a Gaussian fuzzy random variable defined in \cite{feng2000gaussian} and a fuzzy stochastic process defined in \cite{wang1992theory}, the theory of a Gaussian fuzzy process is discussed, and an expression for the posterior fuzzy mean and fuzzy variance is derived. Next, assuming that a fuzzy-valued function that is to be optimized follows a Gaussian fuzzy process as prior. Based on a few observations, the posterior is updated using the prior, and the posterior fuzzy mean and fuzzy variance are used to define an acquisition function called fuzzy upper confidence bound, which balances between exploration and exploitation and guides the search techniques to get the optimal solutions. The contributions of this work are as follows:
\begin{itemize}
    \item Introduction to \emph{Gaussian fuzzy process} (GFP) and derivations of expression for posterior fuzzy mean and fuzzy variance.
    \item Defining fuzzy acquisition functions called fuzzy expected improvement and fuzzy upper confidence bound.
    \item Generalizing the Bayesian optimization technique to a fuzzy scenario named BFO, where it can optimize a fuzzy-valued function.
    \item Application of the proposed technique in two different problems using financial and climate datasets.
\end{itemize}
The advantage of BFO is that it integrates fuzzy logic with Bayesian optimization, making it more suitable to handle fuzzy-valued objective functions effectively. Incorporating GFP as a prior belief about the fuzzy valued objective function, both randomness and fuzziness in the optimization process are taken into consideration as GFP is based on fuzzy random variables. Fuzzy acquisition functions like \emph{fuzzy expected improvement} (FEI) and \emph{fuzzy upper confidence bound} (FUCB) allow BFO to balance exploration and exploitation more effectively. This leads to a more thorough search of the solution space, potentially discovering better optima. This optimization method can also be used when the explicit form of the objective function is not known, but only its fuzzy values at a few points in the search space are available.

\par In Section \ref{preliminaries}, introduction to fuzzy sets and fuzzy numbers, together with fuzzy matrix, fuzzy valued functions are discussed. Also, traditional Gaussian processes and Bayesian optimization, along with acquisition functions, are discussed. Then Section \ref{Fuzzy random variables} introduces fuzzy random variables and expressions for the fuzzy mean, variance, covariance, etc. Also, some properties of them are included and the definition of the fuzzy probability density function is given. Next, Section \ref{Gaussian fuzzy random variables} discusses the definition of a Gaussian fuzzy random variable and a Gaussian fuzzy random vector with the expression for the mean and variance of conditional distributions. Section \ref{Gaussian fuzzy process} discusses the concepts of the Gaussian fuzzy stochastic process and the expression for the posterior fuzzy mean and fuzzy variance. Bayesian fuzzy optimization is developed in Section \ref{Bayesian fuzzy optimization}, which is used to optimize a fuzzy-valued objective function with very few function evaluations. In Section \ref{Application of Bayesian fuzzy optimization}, two applications of the proposed Bayesian fuzzy optimization method are provided. The first application is to solve the fuzzy mean-variance portfolio allocation problem, and the second application is to optimize the unknown fuzzy-valued function on Indian annual temperature data. Problem setups and results are also discussed in this section. The work concludes with Section \ref{conclusion} providing concluding remarks, limitations, and possible future works.

\section{Preliminaries}\label{preliminaries}

In many real-world problems, the data and information available are not always precise or clear. Instead, they often involve a degree of uncertainty or imprecision. To address this, fuzzy logic provides a framework for dealing with such vague and ambiguous information. This section introduces the concepts of fuzzy sets and fuzzy numbers, which extend classical notions of sets and numbers to handle uncertainty and imprecision more effectively. Also, the related concepts that are needed in the following sections are introduced. 
\subsection{Fuzzy sets and numbers}\label{Fuzzy Sets and Numbers}
Fuzzy Sets are a generalization of classical sets, allowing for degrees of membership rather than a binary membership status. Fuzzy set theory was first introduced by Zadeh \cite{zadeh1965fuzzy} as a generalization of a classical set or crisp set, where the author tried to create a framework that is used to deal with imprecise data and modeling of uncertainties.\par
\begin{definition}
A \textit{fuzzy set} $\tilde{A}$ is an ordered pair $\tilde{A}= \{(x,\mu_{\tilde{A}}(x))\hspace{1mm}|\hspace{1mm} x\in \mathcal{X}\}$
, where $\mathcal{X}$  is the universal set and $\mu_{\tilde{A}}: \mathcal{X} \rightarrow [0,1]$ is the membership function.
\end{definition}
\begin{definition}
A subset $A_{\alpha}=\{x \in \mathcal{X} \hspace{1mm}| \hspace{1mm} \mu_{\tilde{A}}(x) \geq \alpha 
 \} , \forall \alpha \in [0,1]$ of $\mathcal{X}$ is said to be \textit{$\alpha$-cut of the fuzzy set} $\tilde{A}$. i.e, the elements belonging to the fuzzy set $\tilde{A}$ with membership degree greater or equal to $\alpha$. The $\alpha$-cut can also be written as $A_{\alpha} = [\underline{A}_{\alpha}, \overline{A}_{\alpha}]$, where $\underline{A}_{\alpha} = \text{inf}(A_{\alpha})$ and $\overline{A}_{\alpha} = \text{sup}(A_{\alpha})$. The set  $S(\tilde{A})=\{ x \in \mathcal{X}\hspace{1mm}| \hspace{1mm}\mu_{\tilde{A}}(x) > 0\}$ is called \textit{support} of the fuzzy set $\tilde{A}$.
\end{definition}

\begin{definition}
    The indicator function $I_{A_{\alpha}}(x)$ of the $\alpha$-cuts of the fuzzy number $\tilde{A}$ is defined by 
    \begin{equation}\label{eq:indicator}
        I_{A_{\alpha}}(x) = \begin{cases}
                            1 &\text{if } \, \mu_{\tilde{A}}(x) \geq \alpha \\
                            0 \quad &\text{otherwise.}\\
\end{cases}
    \end{equation}
\end{definition}
The membership function of $\tilde{A}$ can be written as $\mu_{\tilde{A}}(x) = \text{sup}\{ \alpha I_{\tilde{A}}(x) \hspace{1mm} | \hspace{1mm} \alpha \in [0,1] \}$. 
\begin{definition}
A Fuzzy set $\tilde{A}$ is said to be \textit{convex} fuzzy set if $\forall x_1,x_2 \in \mathcal{X}$ and $\lambda \in [0,1]$, $\mu_{\tilde{A}}(\lambda x_1+ (1-\lambda)x_2)\geq \min\{\mu_{\tilde{A}}(x_1), \mu_{\tilde{A}}(x_2)\}$. And a fuzzy set $\tilde{A}$ is said to be a \textit{normal} fuzzy set if $\exists x \in \mathcal{X}$, such that $\mu_{\tilde{A}}(x)=1$.
\end{definition}
\begin{definition}
A fuzzy subset $\tilde{A}$ of the universal set $\mathcal{X}$ is called a \textit{fuzzy number}, if \\
(a) $\tilde{A}$ is normal, \\
(b) $\tilde{A}$ is convex, and \\
(c) $\mu_{\tilde{A}}$ is upper  semi-continuous function with bounded support.
\end{definition}

Different types of fuzzy numbers can be defined; popular classes include triangular, trapezoidal, Gaussian, and LR fuzzy numbers. Each type has unique properties and applications, providing a range of tools for effectively handling uncertainty and imprecision in fuzzy logic systems. Understanding these types will help in selecting the most appropriate fuzzy number representation for a given problem or analysis.
\begin{definition}\label{TFN}
A fuzzy number $\tilde{A}= (\underline{a}, a, \overline{a})$ is called a \emph{triangular fuzzy number} (TFN) if its membership function is as follows \\
\begin{equation}\label{eq:tfn}
\mu_{\tilde{A}}(x)=\begin{cases}
\frac{x-\underline{a}}{a-\underline{a}} \quad &\text{if } \, x \in [\underline{a},a] \\
\frac{\overline{a}-x}{\overline{a}-a} \quad &\text{if } \, x \in [a, \overline{a}] \\
0 \quad &\text{otherwise}\\
\end{cases}
\end{equation}
\end{definition}

\begin{definition}
    A fuzzy number $\tilde{A}$ is said to be an LR fuzzy number if 
    \begin{equation}\label{eq:LRfn}
        \mu_{\tilde{A}}(x)=
        \begin{cases}
            L\left(\frac{a-x}{\alpha}\right) \quad &\text{if } \, x\leq a , \alpha>0 \\
            R\left(\frac{x-a}{\beta}\right) \quad &\text{if } \, x\geq a , \beta>0, \\
        \end{cases}
    \end{equation}
    where $a$ is the mean value of $\tilde{A}$ and $\alpha, \beta$ are the left and right spreads respectively, and $L$ and $R$ are left and right shape functions which are symmetric and non-increasing on $[0, \infty).$ The LR fuzzy number is denoted as $\tilde{A}= (a, \alpha, \beta)_{LR}$. One can see that a triangular fuzzy number is a special kind of LR fuzzy number.
\end{definition}

LR fuzzy numbers are a fundamental type of fuzzy number used in various applications of fuzzy logic and optimization. They are characterized by their Left and Right shape functions, which define the shape and extent of the fuzzy number's membership across different values. These fuzzy numbers are particularly useful due to their straightforward representation and computational ease.
\subsection{Operations on LR fuzzy numbers}\label{operations}
In real-world situations where precise values are difficult to determine, fuzzy numbers are often used to represent uncertain or imprecise data. Algebraic operations (addition, subtraction, multiplication, and division) must be defined on these fuzzy numbers in order to interact with them in mathematical models and decision-making processes. For that, consider $\tilde{A_1}= (a_1, \alpha_1, \beta_1 )_{LR}$ and $\tilde{A_2}= (a_2, \alpha_2, \beta_2 )_{LR}$ are two triangular fuzzy numbers, then the operations on them are defined as follows \cite{dubois1980fuzzy}:
\begin{itemize}
    \item Addition: $(a_1, \alpha_1, \beta_1 )_{LR} + (a_2, \alpha_2, \beta_2 )_{LR} =  (a_1+a_2, \alpha_1+\alpha_2, \beta_1+\beta_2 )_{LR}$
    \item Subtraction: $(a_1, \alpha_1, \beta_1 )_{LR} - (a_2, \alpha_2, \beta_2 )_{LR} =  (a_1-a_2, \alpha_1+\beta_2, \beta_1+\alpha_2 )_{LR}$
    \item Scalar multiplication : $\lambda (a, \alpha, \beta) = \begin{cases}
        (\lambda a , \lambda \alpha, \lambda \beta)_{LR}  \quad &\text{if } \lambda>0\\
        (\lambda a , -\lambda \beta, -\lambda \alpha)_{LR}  \quad &\text{if } \lambda<0\\
    \end{cases}$
    \item Multiplication: $(a_1, \alpha_1, \beta_1 )_{LR} \times (a_2, \alpha_2, \beta_2 )_{LR} \simeq (a_1a_2, a_1 \alpha_2 + a_2 \alpha_1 - \alpha_1 \alpha_2 , a_1 \beta_2+ a_2 \beta_1 + \beta_1\beta_2)_{LR}$
    \item Division: $(a_1, \alpha_1, \beta_1 )_{LR} / (a_2, \alpha_2, \beta_2 )_{RL} \simeq (\frac{a_1}{a_2}, \frac{a_1\beta_2 + a_2\alpha_1}{a_2^2}, \frac{a_1\alpha_2+a_2\beta_1}{a_2^2})_{LR}$
    \item Maximum: $\tilde{max}((a_1, \alpha_1, \beta_1 )_{LR}, (a_2, \alpha_2, \beta_2 )_{LR}) \simeq (max(a_1, a_2), min(\alpha_1,\alpha_2), max(\beta_1, \beta_2))_{LR}$
    \item Minimum: $\tilde{min}((a_1, \alpha_1, \beta_1 )_{LR}, (a_2, \alpha_2, \beta_2 )_{LR}) \simeq (min(a_1, a_2), max(\alpha_1,\alpha_2), min(\beta_1, \beta_2))_{LR}$
\end{itemize}
Ordering fuzzy numbers is an essential aspect of fuzzy logic and fuzzy set theory, allowing for the comparison and ranking of fuzzy quantities. There can be different orderings of fuzzy numbers; two of them are discussed in the following definition.
\begin{definition} \cite{guang1991fuzzy} 
    Let $\tilde{A}$ and $\tilde{B}$ be two fuzzy numbers. $\tilde{A} \leq \tilde{B}$ if sup$(A_{\alpha}) \leq $ sup$(B_{\alpha})$ and inf$(A_{\alpha} )\leq $ inf$(B_{\alpha}) \hspace{1mm} \forall \alpha \in [0,1]$. Another similar definition is given and reasonable properties are discussed in \cite{wang2001reasonable}, where it is said that $\tilde{A} \leq \tilde{B}$ if sup$(A_{\alpha}) \leq $ sup$(B_{\alpha})$ and inf$(A_{\alpha} )\geq $ inf$(B_{\alpha}) \hspace{1mm} \forall \alpha \in [0,1]$.
\end{definition}

There are different opinions about the distance between two fuzzy numbers; some define it as a crisp value, and some define it as a fuzzy value \cite{guha2010new, voxman1998some}. Though the distance between two fuzzy numbers should have a fuzzy distance in view of the author, for this work, distance is needed to measure the error only; for that, the distance defined in \cite{chaudhur1996metric} will be considered.
\begin{definition}
    Let $\tilde{A}, \tilde{B}$ are two fuzzy numbers, $[\underline{A}_{\alpha}, \overline{A}_{\alpha}]$ and $[\underline{B}_{\alpha}, \overline{B}_{\alpha}]$ are the $\alpha$-cuts $\forall \alpha \in [0,1]$. If $h_{alpha}(A_{\alpha},B_{\alpha})$ represents the Hausdorff distance between the compact intervals $A_{\alpha}$ and $B_{\alpha}$ defined by
    \begin{equation*}
        h_{\alpha}(A_{\alpha},B_{\alpha}) = \max \left( \sup_{x \in A_\alpha} \inf_{y \in B_\alpha} d(x, y), \sup_{y \in B_\alpha} \inf_{x \in A_\alpha} d(x, y) \right).
    \end{equation*}
    then the distance between two fuzzy numbers is defined as 
    \begin{equation}\label{eq:distance}
        h(\tilde{A}, \tilde{B}) = \int_{0}^{1} h_\alpha(A_\alpha, B_\alpha) \, d\alpha
    \end{equation}
\end{definition}
A fuzzy matrix is essentially a matrix whose entries are fuzzy numbers or fuzzy sets. These matrices can represent various types of systems, including fuzzy linear systems, fuzzy relations, etc. The operations performed on fuzzy matrices, such as addition and multiplication, are adapted to handle the fuzzy nature of the elements. This involves using fuzzy arithmetic and membership functions to operate on the fuzzy numbers within the matrix. Later in his work, to develop the theory, the concept of a fuzzy inverse matrix is needed. For the definition of the fuzzy inverse matrix, we refer to
 \cite{dehghan2009inverse}
\begin{definition}
    A fuzzy matrix $\tilde{M}$ is a matrix whose elements are fuzzy numbers, i.e, $\tilde{M}_{ij}= \tilde{m}_{ij}$, where each $\tilde{m}_{ij}$ is a fuzzy number for all $i,j$. A positive fuzzy matrix $\tilde{A}= (A, M, N)$, where $A, M, N$ are the real-valued matrices containing the means, left spreads and right spreads respectively, then its inverse is given by $\tilde{B}= (A^{-1}, -A^{-1}MA^{-1}, -A^{-1}NA^{-1})$. 
\end{definition}

Note that a function $\tilde{f}$ is said to be a fuzzy-valued function if its outputs are not precise, but rather represented by fuzzy numbers. It extends the concept of classical functions to fuzzy situations.

\subsection{Gaussian process}\label{Gaussian Process}
A Gaussian process, $\mathcal{G}\mathcal{P}$, is a generalization of the multivariate Gaussian distributions to infinitely many variables. It is distributed over functions. Formally, we can say that,
\begin{definition}
A Gaussian Process is a collection of random variables, any finite number of which are multivariate Gaussian.
\end{definition}
Assume a function $f : \mathcal{X} \rightarrow{} \mathbb{R}$ follows Gaussian process, i.e., $f(x) \sim\mathcal{G}\mathcal{P}(\mu , \kappa)$, where $\mu(x) : \mathcal{X} \rightarrow{} \mathbb{R} $, defined as $\mu(x) = \mathbb{E}[f(x)]$,  is the mean function and $\kappa : \mathcal{X} \times \mathcal{X} \rightarrow{}  \mathbb{R}$, is covariance or kernel function. Here $\mu(x)$ is the average function value of all functions present in the distribution at the point $x$ and $\kappa(x, x^{'})$ represents the dependence between the function values at different input points. A kernel is usually chosen based on the assumption that two points are more correlated as their distance decreases. \par Consider a finite collection of $n$ points, $x_{1},x_{2},\ldots, x_{n} \in\mathcal{X}$. Then their function values are $f_{i} = f(x_{i})$, for $i = 1,2,\ldots, n$. From the assumption $f_{1:n} = [f_{1}, f_{2}, \ldots , f_{n}]$ are Jointly Gaussian with the mean vector  $\mu_{1:n} = [\mu_{1}, \mu_{2}, \ldots , \mu_{n}]$ and covariance matrix $\mathbf{K}_{i,j} = \kappa(x_{i}, x_{j})$. Now for a new point $x$, its function value $f(x)$ also follows Gaussian distribution with the following mean and variance function.\\
\begin{equation}\label{eq:3.1}
  \mu_{*}(x) = \mu(x) + \mathbf{k}(x, x_{1:n})^{T}\mathbf{K}^{-1}(f_{1:n} - \mu_{1:n})
\end{equation}
\begin{equation}\label{eq:3.2}
  \sigma^{2}_{*}(x) = \kappa(x,x) - \mathbf{k}(x, x_{1:n})^{T}\mathbf{K}^{-1}\mathbf{k}(x, x_{1:n})
\end{equation}
Here, $\mathbf{k}(x, x_{1:n}) = [\kappa(x, x_{1}), \kappa(x, x_{2}), \ldots , \kappa(x, x_{n})]$. This mean and variance are called posterior mean and variance and the distribution is called posterior distribution. The posterior mean $\mu_{*}(x)$ is a kernel-dependent weighted average between the prior $\mu(x)$ and an estimate $f_{1:n}$ derived from the data and posterior covariance $\sigma^{2}_{*}(x)$ is nothing but a term subtracted from prior covariance $\kappa(x,x)$ corresponding to the variance removed by the previously seen data.
\par The kernel function must be positive definite, in the sense that for any finite collection of points, the kernel matrix formed by pairwise evaluation is positive definite. There are several kernel functions, but this work will use the Squared exponential kernel, also known as the Radial basis kernel, and defined as 
\begin{equation}\label{eq:3.3}
  \kappa(x,x^{'}) = s^{2}\exp\left({- \frac{(x - x^{'})^2}{2l^{2}}}\right), 
\end{equation}
where $s$ is the scale factor and $l$ is the length scale.

\subsection{Bayesian optimization}\label{Bayesian Optimization}
Bayesian Optimization is a sequential model-based\footnote{Sequential model-based optimization cycles through the process of fitting models and using them to decide which options to examine.} method for carrying out global optimization of unknown, expensive-to-evaluate, black-box objectives.   
A probabilistic model, which captures our belief about the behavior of the unknown objective function, and an acquisition function, which determines where to sample next, are the two main components of Bayesian optimization. After initializing a prior belief about the objective function $f$, which is usually a Gaussian process, $\mathcal{G}\mathcal{P}(\mu(x), \kappa(x, x^{'}))$ and collecting $n$ sample points $x_{1}, x_{2}, \ldots, x_{n}$ with the function values $f(x_{1}), f(x_{2}), \ldots , f(x_{n})$, posterior distribution is updated using Equation \ref{eq:3.1} and \ref{eq:3.2}, which is used to find the maximizer $x_{*}$ of the acquisition function $\alpha : \mathcal{X} \rightarrow \mathbb{R}$. Then posterior is updated with the modified observed values. See Algorithm~\ref{alg:1}.\\

\begin{algorithm}
\caption{Bayesian Optimization}
\label{alg:1}
\begin{algorithmic}[1]
\State{Assume that the Objective function $f$ follows a prior distribution.}
\State{Observe value of $f$ at $n$ points.}
\While{Condition is true }
\State{Update Posterior distribution on $f$.}
\State{From the current Posterior distribution, find the maximizer of the acquisition function.}
\State{Find the function value of $f$ at the maximizer.}
\EndWhile
\State \textbf{Return} Point giving largest objective function value.
\end{algorithmic}
\end{algorithm}
Until now, only the statistical model has been discussed, which is mainly a Gaussian Process and represents belief about the unknown objective function. However, the procedure to generate the sequence of points in each iteration is not described. Random selection of query points could be possible, but that will be a waste; selection strategies, also known as the acquisition function, that use the posterior model to guide the selection search are used.

\subsection{Acquisition functions}

In Bayesian optimization, the acquisition function is the function that determines how the parameter space should be searched with the help of the posterior distribution. 
Improvement-based acquisition functions favor the points that are likely to produce improvement upon the previously observed best objective function value. Let $f_{m}^{*}=max(f(x_{n}))$ for all $n\leq m$ be the best value after $m$-th iteration. So at the $(m+1) $-th iteration if the query point is $x_{m+1}$ and the objective value is $f(x_{m+1})$ then there will be an improvement upon $f_{m}^{*}$, if $f(x_{m+1})-f_{m}^{*}> 0$ . 
\subsubsection{Probability of Improvement (PI)}
At $(m+1)$-th iteration for an arbitrary point $x\in \mathcal{X}$ the improvement upon  $f_{m}^{*}$ is $f(x)-f_{m}^{*}$. Since $f$ follows the Gaussian process with posterior mean and variance as given in Equation \eqref{eq:3.1} and \eqref{eq:3.2}, given the observations $\mathcal{D}=\{(x_{i}, f(x_{i}))\}_{i=1}^{m}$, probability of improvement will be
\begin{equation}\label{eq:3.4}
\alpha(x) = \mathbb{P}[f(x)-f_{m}^{*}> 0] = 1- \mathbb{P}[f(x)\leq f_{m}^{*}] = 1- \Phi\left(\frac{f_{m}^{*}- \mu_{*}(x)}{\sigma_{*}(x)}\right)
\end{equation}
where $\Phi$ is the Standard normal cumulative distribution function. Recall from Section \ref{Bayesian Optimization} that, the acquisition function is then maximized to find the maximizer as the next query point. So, at $(m+1)$-th iteration the query point will be
\begin{equation}\label{eq:3.5}
x_{m+1}= argmax_{x}\alpha(x) 
\end{equation}
Though this early strategy in the literature, PI \cite{ref:3}, performs well when the target is known, in general PI exploits highly \cite{ref:4} with less exploration, which can lead the search procedure to get stuck in a local optimum. To address this problem, the following acquisition function accounts for the expected improvement. 
\subsubsection{Expected improvement}\label{EI}
The \emph{expected improvement} (EI) acquisition function is defined as the expectation of the improvement of the current functional value over the current best functional value, to be positive
\begin{equation}\label{eq:3.6}
\alpha(x) = \mathbb{E}[f(x)-f_{m}^{*}> 0] =  (f_{m}^{*}-\mu_{*}(x)) \Phi\left(\frac{f_{m}^{*}- \mu_{*}(x)}{\sigma_{*}(x)}\right) + \sigma_{*}(x) \phi\left(\frac{f_{m}^{*}- \mu_{*}(x)}{\sigma_{*}(x)}\right), 
\end{equation}    
where $\Phi$ and $\phi$ are the standard normal cumulative(CDF) and probability density(PDF) function respectively. Similarly to Equation \eqref{eq:3.5}, one can also find the maximizer $x_{m+1}$ in this case. Intuitively, it can be thought of as a weighted sum of the improvement and uncertainty, with weights being the standard normal CDF and PDF. i.e., this acquisition function is balancing between the exploitation near the current best objective value and exploring the points where the uncertainty is high. Uncertainty is high means that, the region, where less data has been observed.\\

\subsubsection{Upper confidence bound }\label{ucb} The \emph{upper confidence bound} (UCB) is a popular optimistic way to balance between exploration and exploitation by considering the weighted sum of posterior mean and variance, defined as
\begin{equation}\label{eq:3.7}
\alpha(x) = \mu_{*}(x) + \beta\sigma_{*}(x), 
\end{equation}
where $\beta$ is an unknown parameter that represents how much preference is given to the exploration while searching for the next query point.\par
 
Bayesian optimization has been applied to various optimization problems successfully among which hyper-parameter tuning is the most popular one. In this work, authors will try to use Bayesian optimization techniques for the Portfolio allocation optimization problem.

\section{Fuzzy random variables}\label{Fuzzy random variables}
Classical random variables incorporate the uncertainty due to randomness in the model but fail to take into consideration the uncertainty associated with imprecision or vagueness. However, in many real-world applications, the model contains uncertainty associated with both sources. To overcome this situation, the concept of a fuzzy random variable has been developed in the literature.
The term ``fuzzy random variable" was first used by Kwakernaak \cite{kwakernaak1978fuzzy, kwakernaak1979fuzzy}, and the basic idea behind the definition is to consider random variables taking a fuzzy number instead of real values. On the other hand, Puri and Ralescu \cite{puri1993fuzzy} defined FRV as a fuzzification of a random set and defined the fuzzy numbers on the more generalized Separable Banach space. Throughout this work, the fuzzy random variable defined by Kwakernak will be considered.\par
Consider $(\Omega, \mathcal{A}, \mathcal{P})$ to be a probability space, where $\Omega$ is the sample space, $\mathcal{A}$ is the event space, and $\mathcal{P}$ is the probability measure. Assume $\mathcal{F}(\mathbb{R})$ to be the collection of all fuzzy numbers in $\mathbb{R}$, i.e., $\mathcal{F}(\mathbb{R})= \{\tilde{A}:\mathbb{R} \rightarrow [0,1] \hspace{1mm}| \hspace{1mm}A_{\alpha} \text{ is compact } \forall \alpha \in [0,1]\}$, where $A_{\alpha}= \{x \in \mathbb{R} \hspace{1mm} | \hspace{1mm} \mu_{\tilde{A}}(x) \geq \alpha 
 , \forall \alpha \in (0,1]\}$ and $A_{\alpha}=closure(S(\tilde{A}))$ for $\alpha=0$. \par
 \begin{definition}
  A \textit{fuzzy random variable} is defined as a mapping $X: \Omega \rightarrow \mathcal{F}(\mathbb{R})$ for all $\alpha \in [0,1] \text{ and } \omega \in \Omega$ such that, the real-valued mappings 
 $\underline{X}_{\alpha}:\Omega \rightarrow \mathbb{R} \text{ defined by } \underline{X}_{\alpha}(\omega)= inf(X(\omega)_{\alpha})  $ and $\overline{X}_{\alpha}:\Omega \rightarrow \mathbb{R}$ defined by $\overline{X}_{\alpha}(\omega)= sup(X(\omega)_{\alpha})  $ are real-valued random variables.
 \end{definition}
\subsection{Fuzzy expectation and variance of fuzzy random variables}
If $X$ is a fuzzy random variable, then its fuzzy expectation $E[X]$ is defined in Equation (\ref{eq:expectation}) and its variance $Var(X)$ is given in Equation (\ref{eq:variance}) in terms of their $\alpha$-cuts, as discussed in \cite{berkachy2021fuzzy, kwakernaak1978fuzzy}:  
\begin{align}\label{eq:expectation}
    (E[X])_{\alpha} = \big[  E[\underline{X}_{\alpha}] , E[\overline{X}_{\alpha}]\big] 
\end{align}
\noindent
The membership function of the variance is given by \cite{berkachy2021fuzzy}
\begin{align}
    \mu_{Var(X)}(x) = \text{sup} \{ \alpha I_{\big[(\underline{Var(X)})_{\alpha} \text{,} (\overline{Var(X)})_{\alpha} \big] }(x) | \alpha \in [0 , 1]\}. 
\end{align}
\noindent
Also, the variance can be written using the alpha cuts 
\begin{align}\label{eq:variance}
(Var(X))_{\alpha} = \big[  (\underline{Var(X)})_{\alpha} , (\overline{Var(X)})_{\alpha}\big],  
\end{align} 
where $(\underline{Var(X)})_{\alpha} = \text{inf}_{X^{'} \in [\underline{X}_{\alpha} , \overline{X}_{\alpha}]} (Var(X^{'}))$ and $(\overline{Var(X)})_{\alpha} = \text{sup}_{X^{'} \in [\underline{X}_{\alpha} , \overline{X}_{\alpha}]} (Var(X^{'}))$. 
Similarly covariance between any two fuzzy random variables $X \text{ and } Y$ is defined as
\begin{align}\label{eq:covariance}
(Cov(X, Y))_{\alpha} = \big[  (\underline{Cov(X , Y)})_{\alpha} , (\overline{Cov(X , Y)})_{\alpha}\big],  
\end{align}
\noindent
where  $(\underline{Cov(X, Y)})_{\alpha} = \text{inf}_{X^{'} \in [\underline{X}_{\alpha} , \overline{X}_{\alpha}] , Y^{'} \in [\underline{Y}_{\alpha} , \overline{Y}_{\alpha}]} (Cov(X^{'} , Y^{'}))$ and \\
$(\overline{Cov(X , Y)})_{\alpha} = \text{sup}_{X^{'} \in [\underline{X}_{\alpha} , \overline{X}_{\alpha}] , Y^{'} \in [\underline{Y}_{\alpha} , \overline{Y}_{\alpha}]} (Cov(X^{'} , Y^{'}))$.

\begin{proposition}
        For any two fuzzy random variables $X \text{ and } Y$, $Z=X+Y$ is a fuzzy random variable with mean $E[Z]= E[X]+E[Y]$ and variance $Var[Z]= Var[X]+Var[Y]+2Cov(X,Y)$. 
\end{proposition}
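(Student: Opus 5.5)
The plan is to work entirely at the level of $\alpha$-cuts, using ordinary (Zadeh) fuzzy addition and reducing every claim to statements about the real random variables $\underline{X}_\alpha,\overline{X}_\alpha,\underline{Y}_\alpha,\overline{Y}_\alpha$.

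\textbf{Step 1: $Z$ is a fuzzy random variable.} For each $\omega$, $Z(\omega)=X(\omega)+Y(\omega)$ is again a fuzzy number: normality, convexity, upper semicontinuity and bounded support are preserved under the extension-principle sum. Moreover its $\alpha$-cuts are Minkowski sums of compact intervals, so
\[
Z(\omega)_\alpha=\bigl[\underline{X}_\alpha(\omega)+\underline{Y}_\alpha(\omega),\ \overline{X}_\alpha(\omega)+\overline{Y}_\alpha(\omega)\bigr].
\]
Hence $\underline{Z}_\alpha=\underline{X}_\alpha+\underline{Y}_\alpha$ and $\overline{Z}_\alpha=\overline{X}_\alpha+\overline{Y}_\alpha$. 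These are sums of real random variables and therefore measurable, so $Z$ satisfies the definition of a fuzzy random variable.

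\textbf{Step 2: the mean.} By \eqref{eq:expectation} and linearity of ordinary expectation,
\[
(E[Z])_\alpha=\bigl[E\underline{X}_\alpha+E\underline{Y}_\alpha,\ E\overline{X}_\alpha+E\overline{Y}_\alpha\bigr]=(E[X])_\alpha+(E[Y])_\alpha .
\]
Since the right-hand side is the $\alpha$-cut of $E[X]+E[Y]$ and this holds for every $\alpha$, we get $E[Z]=E[X]+E[Y]$.

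\textbf{Step 3: the variance.} Fix $\alpha$ and let $S_X=\{X' : \underline{X}_\alpha\le X'\le\overline{X}_\alpha\}$ be the set of selections, with $S_Y,S_Z$ defined analogously.

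First, the key lemma: $S_Z=\{X'+Y' : X'\in S_X,\ Y'\in S_Y\}$. The inclusion "$\supseteq$" is immediate from Step 1. For "$\subseteq$", given $Z'\in S_Z$, set
\[
t=\frac{Z'-\underline{Z}_\alpha}{\overline{Z}_\alpha-\underline{Z}_\alpha}\in[0,1],
\]
with $t=0$ where the denominator vanishes. Then $X'=\underline{X}_\alpha+t(\overline{X}_\alpha-\underline{X}_\alpha)$ and $Y'=\underline{Y}_\alpha+t(\overline{Y}_\alpha-\underline{Y}_\alpha)$ are measurable selections in $S_X$ and $S_Y$ with $X'+Y'=Z'$.

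Consequently $(Var(Z))_\alpha$ has endpoints $\inf$ and $\sup$ of $Var(X'+Y')$ over pairs $(X',Y')\in S_X\times S_Y$. Applying the classical identity $Var(X'+Y')=Var(X')+Var(Y')+2Cov(X',Y')$ pointwise in the pair, we get
\[
(Var(Z))_\alpha=\Bigl[\inf_{(X',Y')}\bigl(Var X'+Var Y'+2Cov(X',Y')\bigr),\ \sup_{(X',Y')}\bigl(\cdots\bigr)\Bigr].
\]
By definitions \eqref{eq:variance} and \eqref{eq:covariance}, the $\alpha$-cut of $Var[X]+Var[Y]+2Cov(X,Y)$ under interval arithmetic is the interval whose endpoints are the sum of the separate infima and the sum of the separate suprema. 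The inequality $\inf(a+b+c)\ge\inf a+\inf b+\inf c$, and its dual for suprema, then give
\[
(Var(Z))_\alpha\subseteq(Var(X)+Var(Y)+2Cov(X,Y))_\alpha .
\]

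\textbf{Main obstacle.} The hard step is upgrading this inclusion to the equality claimed in the statement. That requires a single pair $(X',Y')$ that simultaneously attains the infima (respectively suprema) of $Var X'$, $Var Y'$ and $Cov(X',Y')$.

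My first attempt would be to construct such jointly extremal selections using the same parametrisation by a common $t(\omega)$ as in the key lemma. I would check whether the extremal selections for the three functionals can be taken of this comonotone form, for instance endpoint selections $\underline{\cdot}_\alpha/\overline{\cdot}_\alpha$ switched on the same events. If this fails in general, the proof will establish the identity in the sense of equal left-hand sides together with the inclusion above, with equality holding whenever the extremizers coincide. One such case is when the spreads are deterministic, since then all selections share the same centred fluctuations.
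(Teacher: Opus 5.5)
The paper states this proposition without proof, so there is nothing to compare against. On its own terms, your Steps~1 and~2 are correct: Minkowski sums of $\alpha$-cuts, measurability of $\underline X_\alpha+\underline Y_\alpha$, and linearity of $E$. The selection lemma and the resulting inclusion $(Var(Z))_\alpha\subseteq(Var(X)+Var(Y)+2Cov(X,Y))_\alpha$ are also correct. The gap is that you never prove the claimed equality. It cannot be proved, because under the paper's selection-based definitions \eqref{eq:variance}--\eqref{eq:covariance} it is false.

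Take $\Omega=\{\omega_1,\omega_2\}$ with probabilities $\tfrac12$, and let $X=Y$ be the constant fuzzy random variable whose value is the indicator of $[0,1]$. This is a legitimate fuzzy number, and $\underline X_\alpha=0$, $\overline X_\alpha=1$ for every $\alpha$, while $Z$ is the constant indicator of $[0,2]$. A selection is just a pair of values. For $X'=(a,b)$ and $Y'=(c,d)$ one gets $Var(X')=(a-b)^2/4$ and $Cov(X',Y')=(a-b)(c-d)/4$, hence
\[
(Var(X))_\alpha=(Var(Y))_\alpha=[0,\tfrac14],\qquad (Cov(X,Y))_\alpha=[-\tfrac14,\tfrac14],\qquad (Var(Z))_\alpha=[0,1],
\]
whereas $(Var(X)+Var(Y)+2Cov(X,Y))_\alpha=[-\tfrac12,1]$. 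The upper endpoints agree but the lower ones do not, and they cannot, since a variance is never negative. This is exactly your obstacle. The infimum of $Var$ is attained only by constant selections, while the infimum of $Cov$ needs anti-monotone ones ($Y'=1-X'$), and no single pair does both. In particular your comonotone construction cannot help, because comonotone pairs always have $Cov(X',Y')\ge 0$.

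Two consequences for your write-up follow. First, your proposed sufficient condition for equality is wrong: the counterexample has deterministic (indeed constant) spreads. A deterministic band $[\underline X_\alpha,\underline X_\alpha+c]$ still admits selections $\underline X_\alpha+U$ with arbitrary $[0,c]$-valued $U$, so the selections do not share centred fluctuations.

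Second, the correct general statement under \eqref{eq:variance} is the one you actually proved. Namely, $(Var(Z))_\alpha$ is the range of $Var(X')+Var(Y')+2Cov(X',Y')$ over pairs of selections. It is therefore contained in the interval-arithmetic sum, with equality exactly when one pair attains all the relevant extrema simultaneously.

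If instead variance and covariance are read endpoint-wise, the identity does hold. This is how the paper implicitly uses them in Theorem~\ref{th:conditional_distribution}, where $(\underline{\Sigma})_\alpha$ is the covariance matrix of $\underline X_\alpha$. The identity then follows at once from your Step~1, by applying the classical formula separately to $\underline Z_\alpha=\underline X_\alpha+\underline Y_\alpha$ and $\overline Z_\alpha=\overline X_\alpha+\overline Y_\alpha$. That is, however, a different definition from the one under which the proposition is stated.
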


\subsection{Fuzzy probability density function}
The general definition of a fuzzy probability distribution stated by \cite{viertl2011statistical} is given as  
\begin{definition}\label{fuzzypdf}
    Consider $(\Omega, \mathcal{A}, \mathcal{P})$ to be a probability space. A fuzzy-valued function $\tilde{f}: \Omega \rightarrow \mathcal{F}(\mathbb{R})$, represented by its $\alpha$-cuts $\big[ \underline{f}_{\alpha}(x), \overline{f}_{\alpha}(x) \big]$  is said to be fuzzy probability density function if it satisfies the following conditions:
    \begin{itemize}
        \item $\int_{\Omega}\underline{f}_{\alpha}(x)d\mathcal{P}(x)$ and $\int_{\Omega}\overline{f}_{\alpha}(x)d\mathcal{P}(x)$ exist. 
        \item $1 \in \Big[\int_{\Omega}\underline{f}_{1}(x)d\mathcal{P}(x) , \int_{\Omega}\overline{f}_{1}(x)d\mathcal{P}(x)\Big]$ and the support of the fuzzy integral is in $\mathbb{R}^{+}$
        \item There exists a crisp probability density function $f: \Omega \rightarrow \mathbb{R}^{+}$ with $\underline{f}_{1}(x) \leq f(x) \leq \overline{f}_{1}(x) \forall x \in \Omega$.  
    \end{itemize}
\end{definition}
\begin{figure}
    \centering
        \includegraphics[width=0.5\textwidth]{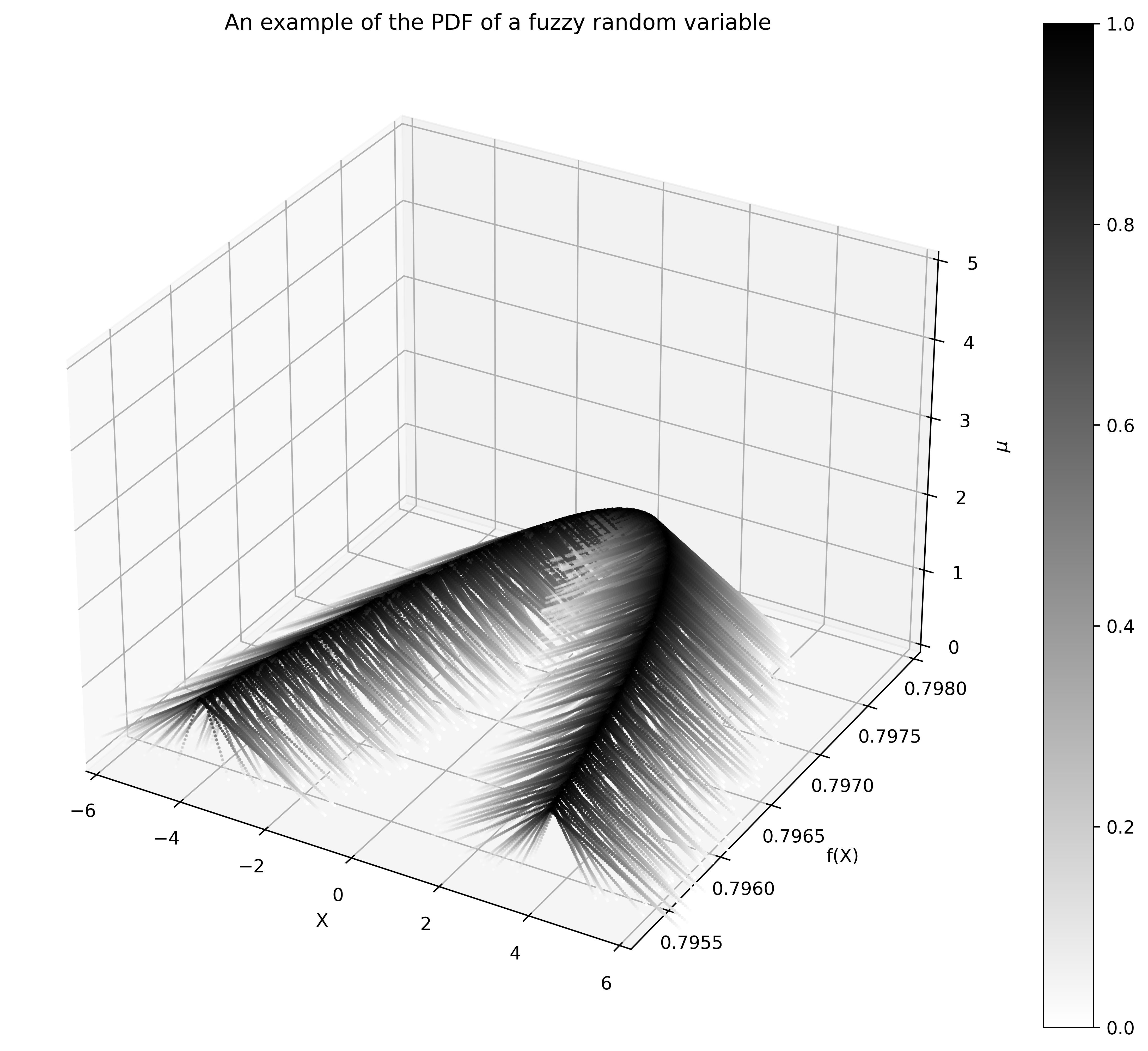}
         \caption{Example of a fuzzy random variable}
         \label{fig:FRV}
\end{figure}
\noindent
Having established the foundational concepts and significance of fuzzy random variables in capturing both randomness and imprecision in complex systems, the next movement is towards a more specialized area within this framework: Gaussian fuzzy random variables. Gaussian fuzzy random variables offer a specific approach that combines the properties of Gaussian distributions with fuzzy logic.

\section{Gaussian fuzzy random variables}\label{Gaussian fuzzy random variables}

The concept and theoretical development of Gaussian fuzzy random variable is derived in the work \cite{feng2000gaussian}. The author provided the expression for the fuzzy mean and variances for the Gaussian fuzzy random variable as well as the Gaussian fuzzy random vector. They have left the space open for developing the Gaussian fuzzy process. Simplified definitions can be stated as 
\begin{definition}
  A fuzzy random variable $X:\Omega \rightarrow \mathcal{F}(\mathbb{R})$ is said to be a Gaussian fuzzy random variable if the corresponding real-valued random variables $\underline{X}_{\alpha} \text{ and } \overline{X}_{\alpha}$ for all $\alpha \in [0,1]$ are Gaussian random variables with known mean and variance.  
\end{definition}
\subsection{Gaussian fuzzy random vector}
$(X^1, X^2, \ldots , X^k) : \Omega \rightarrow (\mathcal{F}(\mathbb{R}))^k $ is said to be a fuzzy random vector if $X^j$'s are fuzzy random variables $\forall j \in \{1, 2, \ldots, k\}$.
\begin{definition}\label{fuzzy_random_vector}
A fuzzy random vector $\mathbf{X} = (X^1, X^2, \ldots , X^k)$ is said to be a $k$-dimensional Gaussian fuzzy random variable if 
\[
Z = \sum_{i=1}^{k} a_i X^i
\]
is a Gaussian fuzzy random variable for every $a_1, a_2, \ldots, a_k \in \mathbb{R}$.

The fuzzy mean vector and fuzzy covariance matrix of $\mathbf{X}$ are given by
\[
m = \big(E[X^1], E[X^2], \ldots, E[X^k]\big), 
\Sigma = (\tilde{\sigma}_{ij}),
\]
respectively, where $\tilde{\sigma}_{ij}$ denotes the fuzzy covariance of $X^i$ and $X^j$ (see Equations~(\ref{eq:expectation}) and (\ref{eq:covariance}) for the $\alpha$-cuts of the mean and covariance).
\end{definition}
\begin{definition}
    The fuzzy probability density function for the fuzzy random vector $X$ is denoted as $F_X$ and defined by its $\alpha$-cuts as $\big[(\underline{F_X})_{\alpha}(x) , (\overline{F_X})_{\alpha}(x)\big]$ $\forall \alpha \in[0,1]$, where $(\underline{F_X})_{\alpha}(x) $ and $ (\overline{F_X})_{\alpha}(x)$ are the joint probability distributions of the real-valued random vectors $\underline{{X}}_{\alpha} = ((\underline{X^{1}})_{\alpha}, (\underline{X^{2}})_{\alpha}, \ldots, (\underline{X^{k}})_{\alpha})$ and $\overline{X}_{\alpha} = ((\overline{X^{1}})_{\alpha}, (\overline{X^{2}})_{\alpha}, \ldots, (\overline{X^{k}})_{\alpha})$. Expressions for $(\underline{F_X})_{\alpha}(x) $ and $ (\overline{F_X})_{\alpha}(x)$ are 
\end{definition}
\begin{align}\label{lower_pdf_frvector}
    (\underline{F_X})_{\alpha}(x) = 
 \frac{1}{(2\pi)^{k/2} |(\underline{\boldsymbol{\Sigma}})_{\alpha}|^{1/2}} \exp\left(-\frac{1}{2} (\underline{X}_{\alpha} - (\underline{m})_{\alpha})^T (\underline{\boldsymbol{\Sigma}})_{\alpha}^{-1} (\underline{X}_{\alpha} - (\underline{m})_{\alpha})\right)
\end{align}
\begin{align}\label{upper_pdf_frvector}
    (\overline{F_X})_{\alpha}(x) = 
 \frac{1}{(2\pi)^{k/2} |(\overline{\boldsymbol{\Sigma}})_{\alpha}|^{1/2}} \exp\left(-\frac{1}{2} (\overline{X}_{\alpha} - (\overline{m})_{\alpha})^T (\overline{\boldsymbol{\Sigma}})_{\alpha}^{-1} (\overline{X}_{\alpha} - (\overline{m})_{\alpha})\right), 
\end{align}
where  $(\underline{m})_{\alpha}, (\underline{\boldsymbol{\Sigma}})_{\alpha}$ and $(\overline{m})_{\alpha}, (\overline{\boldsymbol{\Sigma}})_{\alpha}$ are the mean vectors and covariance matrices for the real-valued Gaussian random vectors $\underline{X}_{\alpha}$ and $\overline{X}_{\alpha}$ respectively.
\subsection{Conditional fuzzy Gaussian random variables}
\begin{theorem}\label{th:conditional_distribution}
    If two sets of fuzzy random variables are jointly fuzzy Gaussian, then the conditional distribution of one set conditioned on the other is again fuzzy Gaussian with conditional fuzzy mean and fuzzy covariance matrix given in terms of their $\alpha$-cuts by $({m_{a|b}})_{\alpha} = [(\underline{m_{a|b}})_{\alpha} , (\overline{m_{a|b}})_{\alpha}]$ and  $({\Sigma_{a|b}})_{\alpha} = [(\underline{\Sigma_{a|b}})_{\alpha} , (\overline{\Sigma_{a|b}})_{\alpha}]$, where $(\underline{m_{a|b}})_{\alpha}$ and $(\underline{\Sigma_{a|b}})_{\alpha}$ are the real valued conditional mean vector and covariance matrix for $(\underline{X_a})_{\alpha}$ conditioned on $(\underline{X_b})_{\alpha}$ and similar for $(\overline{X_a})_{\alpha}$ conditioned on $(\overline{X_b})_{\alpha}$. 
\end{theorem}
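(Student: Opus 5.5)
The proof reduces the fuzzy statement to a family of crisp statements, one for each $\alpha\in[0,1]$ and each endpoint (lower and upper), and then reassembles them through the $\alpha$-cut representation. Write the jointly fuzzy Gaussian vector as $\mathbf{X}=(X_a,X_b)$, with $X_a$ of dimension $p$ and $X_b$ of dimension $q$. By the definition of a $k$-dimensional Gaussian fuzzy random variable and of the fuzzy probability density function, for every fixed $\alpha$ the real random vectors $\underline{X}_{\alpha}=((\underline{X_a})_{\alpha},(\underline{X_b})_{\alpha})$ and $\overline{X}_{\alpha}=((\overline{X_a})_{\alpha},(\overline{X_b})_{\alpha})$ have the crisp Gaussian densities (\ref{lower_pdf_frvector}) and (\ref{upper_pdf_frvector}). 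Their mean vectors are $(\underline{m})_{\alpha},(\overline{m})_{\alpha}$ and their covariance matrices are $(\underline{\boldsymbol\Sigma})_{\alpha},(\overline{\boldsymbol\Sigma})_{\alpha}$. Partition these into blocks
\[
(\underline{m})_{\alpha}=\begin{pmatrix}(\underline{m_a})_{\alpha}\\ (\underline{m_b})_{\alpha}\end{pmatrix},\qquad (\underline{\boldsymbol\Sigma})_{\alpha}=\begin{pmatrix}(\underline{\Sigma_{aa}})_{\alpha}&(\underline{\Sigma_{ab}})_{\alpha}\\ (\underline{\Sigma_{ba}})_{\alpha}&(\underline{\Sigma_{bb}})_{\alpha}\end{pmatrix},
\]
and partition the upper quantities in the same way.

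Next, for each fixed $\alpha$, I would apply the classical conditioning result for multivariate normals. Completing the square in the exponent of (\ref{lower_pdf_frvector}), or equivalently using the Schur complement factorisation of $(\underline{\boldsymbol\Sigma})_{\alpha}^{-1}$, shows that $(\underline{X_a})_{\alpha}$ given $(\underline{X_b})_{\alpha}$ is Gaussian with
\[
(\underline{m_{a|b}})_{\alpha}=(\underline{m_a})_{\alpha}+(\underline{\Sigma_{ab}})_{\alpha}(\underline{\Sigma_{bb}})_{\alpha}^{-1}\big((\underline{X_b})_{\alpha}-(\underline{m_b})_{\alpha}\big),\qquad (\underline{\Sigma_{a|b}})_{\alpha}=(\underline{\Sigma_{aa}})_{\alpha}-(\underline{\Sigma_{ab}})_{\alpha}(\underline{\Sigma_{bb}})_{\alpha}^{-1}(\underline{\Sigma_{ba}})_{\alpha}.
\]
The upper endpoint gives the analogous formulas. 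These are exactly the crisp formulas (\ref{eq:3.1})--(\ref{eq:3.2}) at the block level, so this step is routine.

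The third step assembles the conditional object as a fuzzy random variable. Its $\alpha$-cuts are given by the lower and upper conditional variables, and it must be shown to be a Gaussian fuzzy random variable (vector) in the sense of Definition~\ref{fuzzy_random_vector}. Linear combinations $\sum a_i X_a^i$ of the conditional components have endpoint variables that are linear combinations of conditionally Gaussian endpoints, so they are again Gaussian; this gives the Gaussian property for each $\alpha$. The conditional fuzzy mean and covariance then have $\alpha$-cuts $[(\underline{m_{a|b}})_{\alpha},(\overline{m_{a|b}})_{\alpha}]$ and $[(\underline{\Sigma_{a|b}})_{\alpha},(\overline{\Sigma_{a|b}})_{\alpha}]$, which is the claimed form via Equations (\ref{eq:expectation}) and (\ref{eq:covariance}).

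The main obstacle is justifying that these endpoint families actually form $\alpha$-cuts of a fuzzy quantity. That requires, for each $\alpha$, the lower value to be at most the upper value, and the intervals to be nested (nonincreasing in $\alpha$); these are the hypotheses of the representation theorem $\mu(x)=\sup\{\alpha I_{[\cdot,\cdot]}(x)\}$. For the mean, the ordering follows from $\underline{X}_{\alpha}\le\overline{X}_{\alpha}$ and the monotonicity of the cuts in $\alpha$, provided the conditioning values are themselves ordered. For the covariance, the Schur complements need not be ordered automatically. I would handle this by reading the interval $[(\underline{\Sigma_{a|b}})_{\alpha},(\overline{\Sigma_{a|b}})_{\alpha}]$ entrywise as the interval spanned by the two endpoint values, taking the min and max if needed, in the spirit of the inf/sup definition (\ref{eq:covariance}). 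I would state explicitly that, as in \cite{feng2000gaussian}, the endpoint processes are assumed to be modelled so that this nesting holds, and that positive definiteness of $(\underline{\Sigma_{bb}})_{\alpha}$ and $(\overline{\Sigma_{bb}})_{\alpha}$ is assumed so that the inverses exist.
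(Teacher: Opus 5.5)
Your first two steps are exactly the paper's proof. The paper partitions $\mathbf{X}=(X_a,X_b)$, writes the block form of the exponent of (\ref{lower_pdf_frvector}), and cites \cite{bishop2006pattern} for the lower endpoint vectors. It then says ``similarly'' for the upper ones and simply declares the two endpoint results to be the $\alpha$-cuts.

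The paper has nothing corresponding to your last two paragraphs. Two claims you make there are false as stated, so they cannot be offered as derivations.

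The first concerns ordering. The ordering of the conditional means does not follow from $\underline{X}_\alpha\le\overline{X}_\alpha$ together with ordered conditioning values. The lower and upper regressions have different intercepts, and in general different coefficients. Take the admissible model $\overline{X}_\alpha=\underline{X}_\alpha+c$ with a constant vector $c=(c_a,c_b)\ge 0$, where both endpoint vectors share the coefficient $\beta=\Sigma_{ab}\Sigma_{bb}^{-1}$. Conditioning values $x_b\le y_b$ then give
\[
(\overline{m_{a|b}})_\alpha-(\underline{m_{a|b}})_\alpha=c_a+\beta\,(y_b-x_b-c_b).
\]
With scalar blocks, $\beta=0.9$, $c_a=1$, $c_b=2$ and a crisp observation $x_b=y_b$, this equals $-0.8$. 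So ordering of the conditional means, and likewise nesting in $\alpha$, must be imposed as hypotheses, just as you already concede for the Schur complements.

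The second concerns Gaussianity. Your check for $\sum_i a_iX_a^i$ only covers coefficients of one sign. With mixed signs, fuzzy arithmetic makes the lower endpoint $\sum_{a_i>0}a_i(\underline{X_a^i})_\alpha+\sum_{a_i<0}a_i(\overline{X_a^i})_\alpha$. This combines lower endpoints conditioned on $(\underline{X_b})_\alpha$ with upper endpoints conditioned on $(\overline{X_b})_\alpha$. Endpoint-wise conditioning gives no joint conditional law for such a combination, and nothing in the setup makes lower and upper endpoints jointly Gaussian. So the sum need not be Gaussian.

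Relatedly, (\ref{eq:covariance}) defines the covariance cut as an inf/sup over selections $X'\in[\underline{X}_\alpha,\overline{X}_\alpha]$. This generally differs from the endpoint covariances, so appealing to it does not by itself yield $[(\underline{\Sigma_{a|b}})_\alpha,(\overline{\Sigma_{a|b}})_\alpha]$.

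The paper's proof leaves all of this unaddressed. The sound version of your argument keeps the endpoint-wise computation and states these well-posedness properties as standing assumptions rather than claiming to derive them.
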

\begin{proof}
    Let $\mathbf{X}=(X^1,X^2, \ldots,X^k) $ be a Gaussian fuzzy random vector with fuzzy mean vector $m= (\tilde{m_1} , \tilde{m_2}, \ldots, \tilde{m_k})$ and $\Sigma = \tilde{\sigma}_{ij}$, where $\tilde{m}_i = E[X^i] \hspace{1mm} \forall \hspace{1mm} i \in \{1,2,\ldots, k\}$ (see  Definition \ref{fuzzy_random_vector}).
    \par $\mathbf{X}$ is partitioned into two disjoint subsets $\mathbf{X_a} = (X^1,X^2, \ldots,X^p)$ and $\mathbf{X_b} = (X^{p+1},X^{p+2}, \ldots,X^{k})$, i.e, $X= (X_a , X_b)$. Then the partitioned mean vector is $m = (m_a , m_b)$, where $m_a = (\tilde{m}_1, \tilde{m}_2, \ldots, \tilde{m}_p)$ and $m_b = (\tilde{m}_{p+1}, \tilde{m}_{p+2}, \ldots, \tilde{m}_k)$. Now, the covariance matrix $\Sigma$ is given by 
    $\Sigma = 
    \begin{pmatrix}
        \begin{array}{c c}
            \Sigma_{aa} & \Sigma_{ab} \\
            \Sigma_{ba} & \Sigma_{bb}
        \end{array}
    \end{pmatrix}$
Note that, $\Sigma_{aa}$ and $\Sigma_{bb}$ are symmetric and $\Sigma_{ab} = \Sigma_{ba}^T$. Also, define the precision matrix 
$ \Sigma^{-1} = \Lambda = 
\begin{pmatrix}
    \begin{array}{c c}
        \Lambda_{aa} &  \Lambda_{ab} \\
        \Lambda_{ba} &  \Lambda_{bb}
    \end{array}
\end{pmatrix}
$.
From Equation (\ref{lower_pdf_frvector}) it can be seen that the exponent of $(\underline{F_X})_{\alpha}(x)$ for all $\alpha \in [0,1]$ is given by \\\\

\begin{align*}
-\frac{1}{2} (\underline{X}_{\alpha} - (\underline{m})_{\alpha})^T (\underline{\boldsymbol{\Sigma}})_{\alpha}^{-1} (\underline{X}_{\alpha} - (\underline{m})_{\alpha}) 
&= -\frac{1}{2} ((\underline{X_a})_{\alpha} - (\underline{m_a})_{\alpha})^T (\underline{\boldsymbol{\Lambda}_{aa}})_{\alpha} ((\underline{X_a})_{\alpha} - (\underline{m_a})_{\alpha})\\
&\quad -\frac{1}{2} ((\underline{X_a})_{\alpha} - (\underline{m_a})_{\alpha})^T (\underline{\boldsymbol{\Lambda}_{ab}})_{\alpha} ((\underline{X_b})_{\alpha} - (\underline{m_b})_{\alpha})\\
&\quad -\frac{1}{2} ((\underline{X_b})_{\alpha} - (\underline{m_b})_{\alpha})^T (\underline{\boldsymbol{\Lambda}_{ba}})_{\alpha} ((\underline{X_a})_{\alpha} - (\underline{m_a})_{\alpha})\\
&\quad -\frac{1}{2} ((\underline{X_b})_{\alpha} - (\underline{m_b})_{\alpha})^T (\underline{\boldsymbol{\Lambda}_{bb}})_{\alpha} ((\underline{X_b})_{\alpha} - (\underline{m_b})_{\alpha})
\end{align*}

From the proof given in Section 2.3.1 of \cite{bishop2006pattern}, expression for conditional mean and covariance will be 
\begin{align*}
    (\underline{m_{a|b}})_{\alpha} = (\underline{m_a})_{\alpha} + (\underline{\boldsymbol{\Sigma}_{ab}})_{\alpha}(\underline{\boldsymbol{\Sigma}_{bb}})_{\alpha}^{-1}((\underline{X_b})_{\alpha} - (\underline{m_b})_{\alpha})
\end{align*}
\begin{align*}
    (\underline{\Sigma_{a|b}})_{\alpha} = (\underline{\boldsymbol{\Sigma}_{aa}})_{\alpha} - (\underline{\boldsymbol{\Sigma}_{ab}})_{\alpha}(\underline{\boldsymbol{\Sigma}_{bb}})_{\alpha}^{-1}(\underline{\boldsymbol{\Sigma}_{ba}})_{\alpha}.  
\end{align*}
Similarly, it can be shown that
\begin{align*}
    (\overline{m_{a|b}})_{\alpha} = (\overline{m_a})_{\alpha} + (\overline{\boldsymbol{\Sigma}_{ab}})_{\alpha}(\overline{\boldsymbol{\Sigma}_{bb}})_{\alpha}^{-1}((\overline{X_b})_{\alpha} - (\overline{m_b})_{\alpha})
\end{align*}
\begin{align*}
    (\overline{\Sigma_{a|b}})_{\alpha} = (\overline{\boldsymbol{\Sigma}_{aa}})_{\alpha} - (\overline{\boldsymbol{\Sigma}_{ab}})_{\alpha}(\overline{\boldsymbol{\Sigma}_{bb}})_{\alpha}^{-1}(\overline{\boldsymbol{\Sigma}_{ba}})_{\alpha}.  
\end{align*}
Hence, the conditional fuzzy mean vector and fuzzy covariance matrix can be given by their $\alpha$-cuts as 
\begin{align}\label{fuzzy_conditional_mean}
    ({m_{a|b}})_{\alpha} = [(\underline{m_{a|b}})_{\alpha} , (\overline{m_{a|b}})_{\alpha}]
\end{align}
\begin{align}\label{fuzzy_conditional_variance}
    ({\Sigma_{a|b}})_{\alpha} = [(\underline{\Sigma_{a|b}})_{\alpha} , (\overline{\Sigma_{a|b}})_{\alpha}]. 
\end{align}
\end{proof}
\noindent
After exploring the concept of Gaussian fuzzy random variables and their utility in modeling both sources of uncertainty with Gaussian characteristics, the concept of Gaussian fuzzy stochastic process is developed, where any finite collection of fuzzy random variables is a Gaussian fuzzy random vector.
\section{Gaussian fuzzy process}\label{Gaussian fuzzy process}
Stochastic processes are mathematical models used to describe systems that evolve over time in a probabilistic manner. They are collections of random variables indexed by time or space, capturing the inherent randomness in dynamic systems. One of the most commonly studied stochastic processes is the Gaussian process, where any finite collection of random variables has a joint Gaussian distribution. The traditional stochastic processes effectively model randomness but struggle with the vagueness inherent in many real-world problems, such as linguistic uncertainty or imprecise measurements. Fuzzy stochastic processes are needed to address the situations where both randomness and imprecision coexist. By incorporating fuzzy logic into stochastic processes, fuzzy stochastic processes can better handle these dual sources of uncertainty. A general theory of fuzzy stochastic processes is developed in \cite{wang1992theory}.
\subsection{Fuzzy stochastic processes}
\begin{definition}
    A fuzzy random function $X(t)= \{X^t\}_{t \in T}$ is a fuzzy set-valued function from $T \times \Omega$ to $\mathcal{F}(\mathbb{R})$. For each $t \in T, X^t$ is a fuzzy random variable, where $T = \mathbb{Z}, \mathbb{Z}^{+}, \mathbb{R}, \mathbb{R}^{+}, [a,b], etc.$ If $T = \mathbb{R}$ or $\mathbb{R}^{+} $ or $ [a,b]$, then $X(t)$ is called a fuzzy stochastic process.
\end{definition}
Like in the classical stochastic process

\begin{figure}[h]
    \centering
        \includegraphics[width=0.5\textwidth]{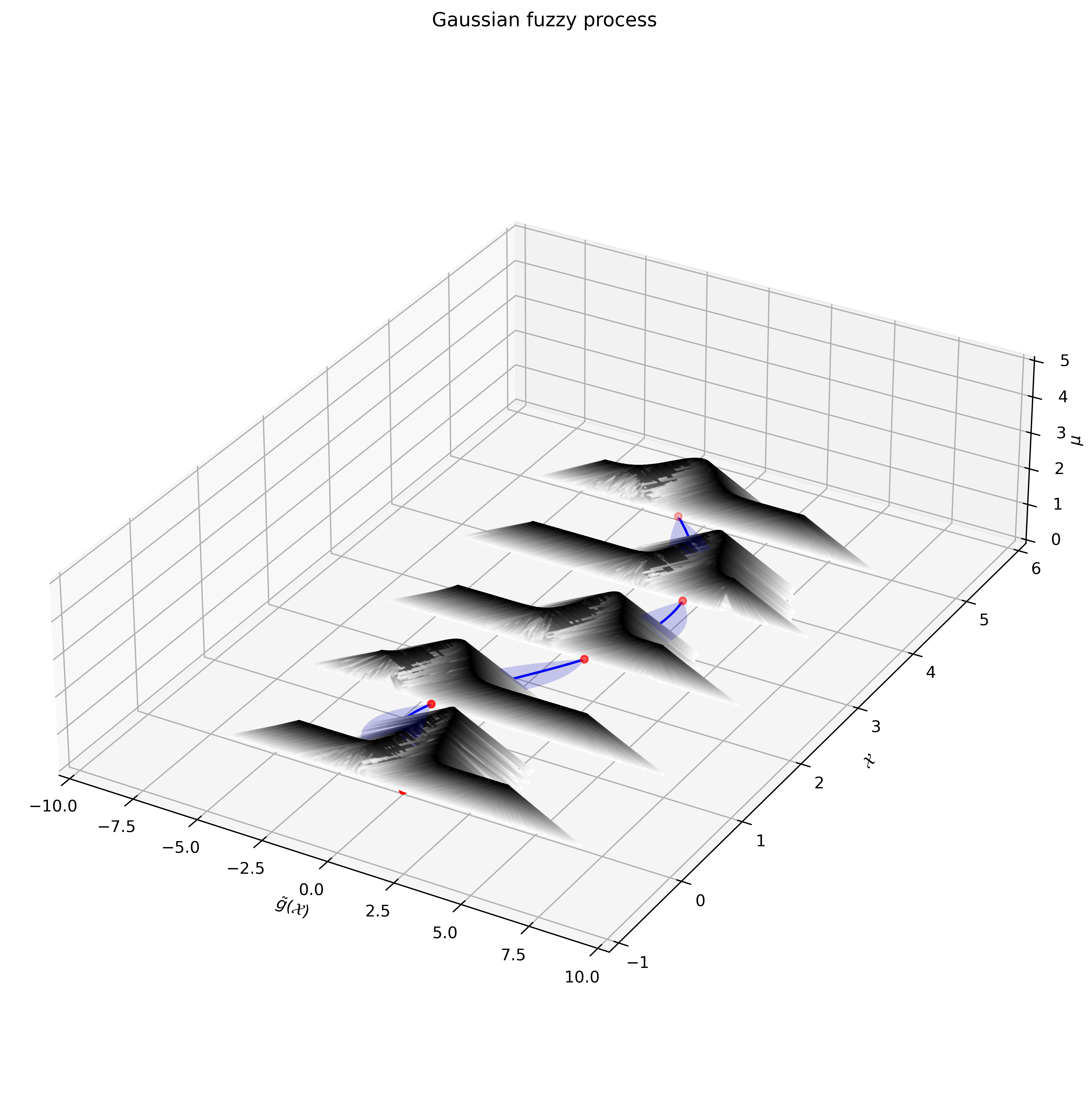}
         \caption{Example of representation of a Gaussian fuzzy process}
         \label{fig:GFP}
\end{figure}
\subsection{Gaussian fuzzy process}
\begin{definition}
     A Gaussian fuzzy process is a collection of fuzzy random variables, and any finite collection of them is a fuzzy Gaussian random vector.
\end{definition}
The Gaussian fuzzy process is completely determined by a fuzzy-valued mean function $\tilde{m}(x)$ and a fuzzy-valued kernel function $\tilde{k}(x,x^{'})$ (by taking $s$ in the kernel function as fuzzy). Assume a function $\tilde{g} : \mathcal{X} \rightarrow{} \mathcal{F}(\mathbb{R})$ follows Gaussian fuzzy process, i.e., $\tilde{g}(x) \sim FGP(\tilde{m} , \tilde{k})$. Here, $\tilde{m}(x) : \mathcal{X} \rightarrow \mathcal{F}(\mathbb{R}) $, defined as $\tilde{m}(x) = E[\tilde{g}(x)]$,  is the mean function and $\tilde{k} : \mathcal{X} \times \mathcal{X} \rightarrow \mathcal{F}(\mathbb{R})$, is covariance or kernel function. Here, $\tilde{m}(x)$ is the expected fuzzy functional value of all functions present in the distribution at the point $x$, and $\tilde{k}(x, x^{'})$ represents the dependence between the function values at different input points. 
\par Consider a finite collection of $n$ points, $x_{1},x_{2},\ldots, x_{n} \in\mathcal{X}$. Then their function values are $\tilde{g}_{i} = \tilde{g}(x_{i})$, for $i = 1,2,\ldots, n$. From the assumption $\tilde{g}_{1:n} = [\tilde{g}_{1}, \tilde{g}_{2}, \ldots , \tilde{g}_{n}]$ are Jointly Gaussian with the mean vector  $m_{\tilde{g}_{1:n}} = [\tilde{m}_{g_1}, \tilde{m}_{g_2}, \ldots , \tilde{m}_{g_n}]$ and covariance matrix $\boldsymbol{\Sigma}_{\tilde{g}_{1:n}\tilde{g}_{1:n}} = \tilde{k}(x_{i}, x_{j})$. 
\begin{theorem}\label{th:posterior_mean_variance}
    For any point $x_* \in \mathcal{X}$ its functional value $\tilde{g}(x)$ is a Gaussian fuzzy random variable given $\tilde{g}_{1:n}$ is a Gaussian fuzzy random vector with $\tilde{g}(x)$ having fuzzy mean and fuzzy variance given by their $\alpha$-cuts as
    \begin{align*}
        (m_{\tilde{g} | \tilde{g}_{1:n}}(x_*))_{\alpha} = [(\underline{m_{\tilde{g} | \tilde{g}_{1:n}}})_{\alpha} , (\overline{m_{\tilde{g} | \tilde{g}_{1:n}}})_{\alpha}]
    \end{align*}
    \begin{align*}
        (\Sigma_{\tilde{g} | \tilde{g}_{1:n}}(x_*))_{\alpha} = [(\underline{\Sigma_{\tilde{g} | \tilde{g}_{1:n}}})_{\alpha} , (\overline{\Sigma_{\tilde{g} | \tilde{g}_{1:n}}})_{\alpha}], 
    \end{align*}
where 
\begin{align*}
    (\underline{m_{\tilde{g} | \tilde{g}_{1:n}}})_{\alpha} &= (\underline{m_{\tilde{g}}})_{\alpha} + (\underline{\boldsymbol{\Sigma}_{\tilde{g}\tilde{g}_{1:n}}})_{\alpha}(\underline{\boldsymbol{\Sigma}_{\tilde{g}_{1:n}\tilde{g}_{1:n}}})_{\alpha}^{-1}((\underline{X_b})_{\alpha} - (\underline{m_{\tilde{g}_{1:n}}})_{\alpha})\\
     (\underline{\Sigma_{\tilde{g} | \tilde{g}_{1:n}}})_{\alpha} &= (\underline{\boldsymbol{\Sigma}_{\tilde{g}\tilde{g}}})_{\alpha} - (\underline{\boldsymbol{\Sigma}_{\tilde{g}\tilde{g}_{1:n}}})_{\alpha}(\underline{\boldsymbol{\Sigma}_{\tilde{g}_{1:n}\tilde{g}_{1:n}}})_{\alpha}^{-1}(\underline{\boldsymbol{\Sigma}_{\tilde{g}_{1:n}\tilde{g}}})_{\alpha}\\
     (\overline{m_{\tilde{g} | \tilde{g}_{1:n}}})_{\alpha} &= (\overline{m_{\tilde{g}}})_{\alpha} + (\overline{\boldsymbol{\Sigma}_{\tilde{g}\tilde{g}_{1:n}}})_{\alpha}(\overline{\boldsymbol{\Sigma}_{\tilde{g}_{1:n}\tilde{g}_{1:n}}})_{\alpha}^{-1}((\overline{X_b})_{\alpha} - (\overline{m_{\tilde{g}_{1:n}}})_{\alpha})\\
     (\overline{\Sigma_{\tilde{g} | \tilde{g}_{1:n}}})_{\alpha} &= (\overline{\boldsymbol{\Sigma}_{\tilde{g}\tilde{g}}})_{\alpha} - (\overline{\boldsymbol{\Sigma}_{\tilde{g}\tilde{g}_{1:n}}})_{\alpha}(\overline{\boldsymbol{\Sigma}_{\tilde{g}_{1:n}\tilde{g}_{1:n}}})_{\alpha}^{-1}(\overline{\boldsymbol{\Sigma}_{\tilde{g}_{1:n}\tilde{g}}})_{\alpha}
\end{align*}
\end{theorem}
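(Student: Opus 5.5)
The plan is to reduce the statement to Theorem~\ref{th:conditional_distribution} by choosing the partition correctly. Fix $x_*\in\mathcal{X}$ and the sample points $x_1,\ldots,x_n$. Consider the $(n+1)$-dimensional fuzzy random vector $\mathbf{X}=(\tilde{g}(x_*),\tilde{g}_1,\ldots,\tilde{g}_n)$. By the definition of a Gaussian fuzzy process, any finite collection of the $\tilde{g}(x)$ is a Gaussian fuzzy random vector, so $\mathbf{X}$ is jointly fuzzy Gaussian in the sense of Definition~\ref{fuzzy_random_vector}. Its fuzzy mean vector is $(\tilde{m}(x_*),\tilde{m}(x_1),\ldots,\tilde{m}(x_n))$. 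Its fuzzy covariance matrix has entries $\tilde{k}(\cdot,\cdot)$ evaluated at these points. The key step is to write this matrix in block form, with $\boldsymbol{\Sigma}_{\tilde{g}\tilde{g}}=\tilde{k}(x_*,x_*)$, $\boldsymbol{\Sigma}_{\tilde{g}\tilde{g}_{1:n}}=\tilde{k}(x_*,x_{1:n})$, $\boldsymbol{\Sigma}_{\tilde{g}_{1:n}\tilde{g}}$ its transpose, and $\boldsymbol{\Sigma}_{\tilde{g}_{1:n}\tilde{g}_{1:n}}=(\tilde{k}(x_i,x_j))$.

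Next, set $\mathbf{X_a}=\tilde{g}(x_*)$, so $p=1$, and $\mathbf{X_b}=\tilde{g}_{1:n}$. Then pass to $\alpha$-cuts. For each $\alpha\in[0,1]$, the lower endpoint vector $\underline{\mathbf{X}}_\alpha$ and the upper endpoint vector $\overline{\mathbf{X}}_\alpha$ are real Gaussian random vectors. Their densities are given by \eqref{lower_pdf_frvector} and \eqref{upper_pdf_frvector}. Their mean vectors are $(\underline{m})_\alpha$ and $(\overline{m})_\alpha$, and their covariance matrices $(\underline{\boldsymbol{\Sigma}})_\alpha$ and $(\overline{\boldsymbol{\Sigma}})_\alpha$ inherit the same block structure.

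Applying Theorem~\ref{th:conditional_distribution} to this partition gives the following. Conditioned on $\tilde{g}_{1:n}$, the variable $\tilde{g}(x_*)$ is fuzzy Gaussian. Its $\alpha$-cut conditional mean and variance are given by \eqref{fuzzy_conditional_mean} and \eqref{fuzzy_conditional_variance}. Substituting the block notation above, with $(\underline{X_b})_\alpha$ and $(\overline{X_b})_\alpha$ the endpoint vectors of the observed $\tilde{g}_{1:n}$, yields exactly the four displayed formulas. Because $p=1$, the covariance $\boldsymbol{\Sigma}_{a|b}$ is a scalar fuzzy variance, as the statement asserts.

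The main obstacle is checking that the endpoint-wise objects really form a fuzzy number, with nonnegative variance. Two facts are needed for each $\alpha$:
\begin{enumerate}
\item $(\underline{\boldsymbol{\Sigma}_{\tilde{g}_{1:n}\tilde{g}_{1:n}}})_\alpha$ and $(\overline{\boldsymbol{\Sigma}_{\tilde{g}_{1:n}\tilde{g}_{1:n}}})_\alpha$ are invertible. I would assume this, as is standard: the kernel matrix with $s$ fixed at the relevant $\alpha$-cut endpoint is a positive multiple of the positive definite squared exponential matrix \eqref{eq:3.3}.
\item The intervals are well defined, with lower endpoint at most upper endpoint, and the cuts nest as $\alpha$ varies.
\end{enumerate}
Nonnegativity follows because each conditional variance is a Schur complement of a positive semidefinite matrix. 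For the ordering and nesting I would argue via the monotone dependence of $s^2$ on the cut endpoints. If this is not automatic, the fallback is to define the $\alpha$-cut as the interval spanned by the two endpoint expressions, which matches the paper's convention in Theorem~\ref{th:conditional_distribution}.
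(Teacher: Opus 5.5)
Your reduction matches the paper's proof, which is the single line ``Follows from Theorem~\ref{th:conditional_distribution}'': you form the jointly Gaussian fuzzy vector $(\tilde{g}(x_*),\tilde{g}_1,\ldots,\tilde{g}_n)$ and apply that theorem with $\mathbf{X_a}=\tilde{g}(x_*)$ and $\mathbf{X_b}=\tilde{g}_{1:n}$. Your extra well-definedness discussion goes beyond anything the paper checks, but note that $s^2$-monotonicity orders only the variance endpoints, not the mean endpoints, because $s^2$ cancels in $\boldsymbol{\Sigma}_{\tilde{g}\tilde{g}_{1:n}}\boldsymbol{\Sigma}_{\tilde{g}_{1:n}\tilde{g}_{1:n}}^{-1}$ and these weights can be negative, so $(\underline{m_{\tilde{g}|\tilde{g}_{1:n}}})_\alpha\le(\overline{m_{\tilde{g}|\tilde{g}_{1:n}}})_\alpha$ can fail for suitable data.
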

\begin{proof}
    Follows from Theorem \ref{th:conditional_distribution}.
\end{proof}

\noindent
After discussing and developing all the necessary theoretical background about fuzzy random variables, Gaussian fuzzy random variables, vectors, and Gaussian fuzzy process, this work next proposes the novel Bayesian fuzzy optimization for optimizing expensive fuzzy-valued objective functions after incorporating different kinds of uncertainty associated with the model.
\section{Bayesian fuzzy optimization}\label{Bayesian fuzzy optimization} 
Bayesian fuzzy optimization is an extension of traditional Bayesian optimization that handles fuzzy-valued objective functions. Traditional Bayesian optimization is a powerful method used for optimizing expensive or black-box functions, often relying on a Gaussian process as a surrogate model to predict the function's behavior. In cases where the objective function is fuzzy-valued, meaning that the outputs are not exact numbers but rather fuzzy numbers, Bayesian optimization must be adapted to account for this fuzziness. This leads to the development of BFO, where the prior belief about the objective function or surrogate model of the objective function is taken to be a Gaussian fuzzy process discussed in Section \ref{Gaussian fuzzy process}. If there is a fuzzy-valued function $\tilde{g}$ that is to be optimized using BFO, the following algorithm gives the steps for the optimization method. 
\subsection{Surrogate model selection}
In Bayesian optimization, a surrogate model is used to define a prior belief about the objective function, guiding the search for the optimum by predicting the function's behavior in unexplored regions. For fuzzy-valued objective functions, generally, the Gaussian fuzzy process serves as a natural extension of the traditional Gaussian process to handle fuzziness with randomness. One can choose different surrogate models depending on the problem.
\subsection{Fuzzy acquisition function}
Like in the traditional BO, here in BFO, the fuzzy acquisition functions determine how the search space should be searched with the help of the posterior fuzzy mean and fuzzy variance. For doing that, the following acquisition functions are generalized.
\subsubsection{Fuzzy expected improvement}
Fuzzy expected improvement is an extension of the traditional expected improvement acquisition function (see Subsection \ref{EI}) used in traditional Bayesian optimization, adapted to handle fuzzy-valued objective functions. The goal of FEI is to guide the search for the optimum by considering the fuzziness in both the model predictions and the objective function. Let $\tilde{f}(x)$  denote the fuzzy value of the objective function at point $x$, and let $\tilde{y}^*$ be the current best fuzzy observation. Then, the FEI can be defined as the expected value of the improvement over $\tilde{y}^*$, considering the fuzzy nature of the predictions.
Mathematically, the FEI can be represented as:
\begin{align}
    \text{FEI}(x) = E[\max(\tilde{y}^* - \tilde{f}(x), 0)]
\end{align}
\subsubsection{Fuzzy upper confidence bound}
\emph{Fuzzy upper confidence bound} (FUCB) is an adaptation of the traditional \emph{upper confidence bound} (UCB) acquisition function discussed in Section \ref{ucb}, which is used in classical Bayesian optimization, tailored to handle fuzzy-valued objective functions. The FUCB balances exploration and exploitation in a simple but effective way.
Let $\tilde{f}(x)$ denote the fuzzy prediction of the objective function at point $x$, where $\tilde{m}(x)$ and $\tilde{\sigma}(x)$ represent the fuzzy mean and fuzzy standard deviation of the prediction, respectively. The FUCB at a point $x$ is defined by 
\begin{align}
    \text{FUCB}(x) = \tilde{m}(x) + \beta \tilde{\sigma}(x). 
\end{align}
Here, $\beta$ is a positive parameter that controls the balance between exploration and exploitation, and the addition is performed using fuzzy arithmetic. The FUCB encourages the selection of points that have either a high fuzzy predicted mean (exploitation) or a high fuzzy predicted uncertainty (exploration), thereby guiding the search for the optimum in the presence of fuzziness in both the model predictions and the objective function.

\begin{algorithm}
\caption{Bayesian fuzzy optimization}
\label{alg:fbo}
\begin{algorithmic}[1]
\State{\textbf{Initialization:} Start with a set of initial $n$ points, say $x_{1},x_{2},\ldots, x_{n} \in\mathcal{X}$ and evaluate the fuzzy objective function values at these points, which are $\tilde{g}_i= \tilde{g}(x_i)$.}
\State{\textbf{Model building:} Assume fuzzy valued objective function $\tilde{g}$ follows a Gaussian fuzzy process as prior belief. So, construct a fuzzy Gaussian process using the initial data. This model predicts the fuzzy output for any input as discussed in Theorem \ref{th:posterior_mean_variance}.}
\State{\textbf{Optimizing acquisition function:} Use a fuzzy acquisition function (e.g., FEI, FUCB) to select the next point to evaluate.}
\State{\textbf{Evaluation:} Evaluate the fuzzy objective function at the new point and update the surrogate model with this new information to get the new posterior fuzzy mean and variance.}
\State{\textbf{Repeat:} Repeat the process, iteratively selecting new points to evaluate based on the updated posterior fuzzy mean, variance, and acquisition function, until a stopping criterion is met.}
\State{\textbf{Return:} Point giving best objective function value}
\end{algorithmic}
\end{algorithm}
BFO extends the traditional BO by incorporating fuzzy sets, allowing it to handle fuzzy-valued objective functions effectively. This integration enables the optimization of problems where outputs are not crisp numbers but fuzzy values, reflecting real-world scenarios with uncertainty associated with both fuzziness and randomness. The introduction of GFP as a prior belief in BFO provides a sophisticated tool for approximating fuzzy objective functions. The GFP is actually based on fuzzy random variables, which are naturally effective in capturing uncertainty, offering a more accurate representation of the objective function’s behavior. The adaptation of acquisition functions, such as FEI and FUCB, enables effective exploration and exploitation in the presence of fuzziness. These functions guide the search process by balancing the trade-off between exploring uncertain regions and exploiting known promising areas. BFO is valuable in various domains, including finance, engineering, etc., where decision-making involves fuzzy or uncertain parameters. The methodology’s ability to handle fuzzy data makes it suitable for optimizing portfolios and designing complex systems, among other applications.
\section{Applications of Bayesian fuzzy optimization}\label{Application of Bayesian fuzzy optimization}

The proposed Bayesian fuzzy optimization technique is applied to two different kinds of fuzzy-valued optimization problems. One is the fuzzy mean-variance portfolio allocation problem, where the task is to find optimal weights to be allocated to a collection of assets that maximize the portfolio return and minimize the portfolio risk. Another is the optimization and analysis of fuzzy-valued temperature data to find the maximum temperature and its occurrence year. Future predictions for the temperature can also be made using Bayesian fuzzy optimization. In the portfolio allocation problem, the objective function has a mathematical expression concerning risk and return, as discussed in Section \ref{formulation}, but in the climate problem, the fuzzy-valued objective function does not have any mathematical expression. In both problems, the proposed method performed well in spite of having very little knowledge about them, which will be discussed later in this section.

\subsection{Fuzzy mean-variance portfolio allocation problem formulation}\label{formulation}

Consider a portfolio of $n$ assets; the goal is to find optimal asset weights. Assume that $R^{1}, R^{2}, \ldots R^{n}$ are fuzzy random variables denoting the return for the individual assets. Taking the fuzzy expectations of the fuzzy random variables, the individual expected return of the assets is found to be, $\pi = [\tilde{\pi_{1}}, \tilde{\pi_{2}},\ldots, \tilde{\pi_{n}}] $, where $\tilde{\pi_{1}} = E[R^{i}]$ for all $ i= 1,2 \ldots, n$. If $\omega= [\omega_{1}, \omega_{2}, \ldots, \omega_{n}]$ is the weight vector associated with the assets, the fuzzy random variable denoting the total portfolio return will be $R = \omega_{1}R^{1}+ \omega_{2}R^{2}+ \ldots +\omega_{n}R^{n}$.

A single-period fuzzy mean-variance portfolio allocation problem is defined as follows
\begin{align*}
& \mbox{maximize}~~ \tilde{\mathcal{R}} = \sum_{i=1}^{n} \omega_{i}\tilde{\pi_{i}}\\ 
& \mbox{minimize}~~ \tilde{\sigma} = \sum_{i=1}^{n}\sum_{j=1}^{n} \omega_{i}\omega_{j}\tilde{\sigma}_{ij}\\ 
&\mbox{subject to}~~ \sum_{i=1}^{n} \omega_{i} =1 , \hspace{3mm}\omega_{i} \geq 0 \hspace{3mm}\forall{i}= 1,2, \ldots, n.
\end{align*}
 
\noindent
The weighted sum technique is used to formally define the objective function, maximizing the total expected portfolio return and minimizing the total expected risk, with weights $\lambda$ and $(1- \lambda)$, respectively. This factor indicates the importance of that particular objective.

\begin{align}\label{eq:objective_function}
& \mbox{maximize}~~ \tilde{g}(\omega)= \lambda \tilde{\mathcal{R}}-(1-\lambda)\tilde{\sigma}= \lambda \sum_{i=1}^{n} \omega_{i}\tilde{\pi_{i}} - (1 - \lambda) \sum_{i=1}^{n}\sum_{j=1}^{n} \omega_{i}\omega_{j}\tilde{\sigma}_{ij}  \\ 
& \mbox{subject to}~~\sum_{i=1}^{n} \omega_{i} =1 , \hspace{3mm}\omega_{i} \geq 0 \hspace{3mm}\forall{i}= 1,2, \ldots, n, \notag 
\end{align}
where $\lambda$ is the scalarization constant. Here, the formulated objective function is a fuzzy-valued function that has to be optimized. The optimization method discussed in Section \ref{Bayesian fuzzy optimization} will be used to solve the problem. 

\subsection{Data and performance measures}
For the experiment purpose of the portfolio allocation problem, the lowest, closing, and highest prices for 10 stocks from different sectors of the Indian National Stock Exchange for each day of five years (from 1st January 2019 to 1st January 2024) are collected. Stocks are chosen from different market sectors like communications, automobiles, retail, energy,  information technology, paints, banking, power, oil and gas, etc, to gain knowledge from the diverse market fields. Returns for each stock for each day are considered as triangular fuzzy numbers (lowest, closing, highest). The returns are assumed to be fuzzy random variables, which not only consider the existing randomness in the problems but also the imprecision of the return prices of the stocks. Next, the fuzzy valued mean-variance portfolio allocation problem is formulated as discussed in Section \ref{formulation}, Equation (\ref{eq:objective_function}). 
\par In the climate application, the historical data about the annual lowest, mean, and highest temperatures in India is collected from the Open Government Data Platform India website. The temperature for the years is considered as triangular fuzzy numbers (lowest, mean, highest). This fuzzy-valued annual temperature function (treated as an objective function) has no mathematical expression. Only a few data points are fed into the optimization method, not the whole dataset. The task is to find the year when the maximum temperature (treated as a fuzzy number) will occur and the temperature itself. Also, the proposed Fuzzy Gaussian process is tested as a regressor using this dataset, and a comparison of its performance with benchmark methods is also provided.

\begin{table*}[htbp]
\centering
\caption{Summary of datasets, experiments, benchmark methods, and key hyperparameters.}
\label{tab:experiment_summary_1}
\small
\setlength{\tabcolsep}{4pt}
\renewcommand{\arraystretch}{1.2}
\resizebox{\textwidth}{!}{
\begin{tabular}{p{4.3cm}| p{4.0cm}| p{3.7cm}| p{4.4cm}}
\hline
\textbf{Dataset description} 
& \textbf{Experiments} 
& \textbf{Methods/Models} 
& \textbf{Key hyperparameters} \\
\hline
\textbf{1:} 5-year data (2019--2024) of 10 stocks from the National Stock Exchange India; (low, close, high) as triangular fuzzy numbers from Yahoo Finance. Tickers of the used stocks are TCS.NS, INFY.NS, RELIANCE.NS, HDFCBANK.NS, LT.NS, HINDUNILVR.NS, ASIANPAINT.NS, COALINDIA.NS, NTPC.NS, TATAMOTORS.NS.
\par
\textbf{2:} Indian annual temperature data (1901--2021); (low, mean, high) as triangular fuzzy numbers from the Open Government Data  platform India.

& \textbf{1:} Comparison of performance of proposed Bayesian Fuzzy Optimization with benchmark methods for portfolio optimization application.
\par
\textbf{2:} Comparison of Fuzzy Gaussian Process with benchmarks using weather data.
\par
\textbf{3:} Observing the performance of BFO for weather data, treating the objective as completely unknown and finding the highest temperature with very few known points.
\par
\textbf{4:} Statistical significance test with a different set of hyperparameters.
& For financial application: Proposed Bayesian Fuzzy Optimization, Bayesian Optimization, Differential Evolution, Genetic Algorithm, Particle Swarm Optimization, Ant Colony Optimization, Artificial Bee Colony.
For climate application: Gaussian Process, Random Forest, Support Vector Regression, Gradient Boosting Regression, and Multi-Layer Perceptron Regression.
& For all experiments and all methods (where applicable), the key hyperparameters are used as: number of initial samples = 10, length scale in kernel function = 0.5, scalarization constant = 0.5, number of trees = 100, maximum depth = 5, learning rate = 0.1, hidden units = 32, number of estimators =100. All the other hyperparameters are taken as default, as provided in the Python library sklearn. Fuzzy Expected Improvement is used as the default acquisition function. Fuzzy Upper Confidence Bound is used in a statistical significance test for the climate data.
\\
\hline
\end{tabular}
}
\end{table*}

\subsection{Results and discussion}
This section presents the experimental results of the proposed Bayesian fuzzy optimization method on two applications: (i) fuzzy mean--variance portfolio allocation and (ii) fuzzy-valued temperature modeling and optimization. These experiments demonstrate the effectiveness of the proposed method in handling uncertainty arising from both randomness and fuzziness, while maintaining strong performance under limited observations. All the details regarding the experiments and hyperparameter settings are provided in Table \ref{tab:experiment_summary_1}.

\subsection{Portfolio optimization results}

Table (\ref{tab:comparison}) presents the comparative performance of BFO against several benchmark optimization algorithms under a fixed evaluation budget of 50 function evaluations over 30 independent runs. It can be observed that the proposed BFO achieves the highest mean objective value (4.687) among all competing methods, along with a relatively low standard deviation (0.225), indicating both superior performance and consistency.

The statistical significance analysis further reveals that BFO performs significantly better than DE, GA, PSO, ABC, and ACO, while showing comparable performance with classical Bayesian optimization (BO). However, the higher mean AUC (area under the curve) value obtained by BFO demonstrates faster convergence, highlighting its efficiency in both exploration and exploitation of the search space.
\begin{table}[h]
\centering
\caption{Performance comparison of optimization methods under fixed evaluation budget(50) for 30 independent runs}
\label{tab:comparison}
\begin{tabular}{lcccc}
\hline
Method & Mean Objective & Std Dev & Statistical Significance & Mean AUC \\
\hline
BFO & 4.687 & 0.225 & --         & 203.11 \\
BO  & 4.472 & 0.286 & $\approx$  & 186.32  \\
DE  & 4.006 & 0.290 & +          & 104.65  \\
GA  & 4.231 & 0.345 & +          & 117.22  \\
PSO & 4.396 & 0.360 & +          & 136.87  \\
ABC & 4.304 & 0.255 & +          & 128.63  \\
ACO & 4.110 & 0.165 & +          & 110.30  \\
\hline
\end{tabular}
\end{table}

\begin{table}
   
\begin{center}
\centering
\scriptsize
\begin{tabular}{l|l|l|l|l|l|l|l}

\hline
$\lambda$  & $n_0$ & $l$ & Mean objective value & Return & Risk &  Sharpe ratio & $E_{total}$ \\
\hline
 0.1 & 5 & 1 & 4.3814 & 10.8862 & 2.1234 & 5.1267 & 0.2875 \\
0.2 & 5 & 1 & 4.4383 & 10.7800 & 1.9034 & 5.6634 & 0.2510 \\
0.3 & 5 & 1 & 4.5842 & 10.8320 & 1.6637 & \textbf{6.5106} & 0.3685 \\
0.5 & 5 & 1 & 4.4550 & 10.9120 & 2.0021 & 5.4500 & 0.2484 \\
0.6 & 5 & 1 & 4.5604 & 10.6651 & 1.5443 & \textbf{6.9059} & 0.5292 \\
0.7 & 5 & 1 & 4.5181 & 10.9796 & 1.9434 & 5.6497 & 0.4621 \\
0.8 & 5 & 1 & 4.7022 & 11.0066 & 1.6022 & \textbf{6.8695} & 0.6014 \\
0.9 & 5 & 1 & 4.6653 & 10.8239 & 1.4934 & \textbf{7.2475} & 0.4986 \\

0.2 & 4 & 1 & 4.3832 & 10.8108 & 2.0444 & 5.2879 & 0.4031 \\
0.3 & 4 & 1 & 4.5151 & 11.1074 & 2.0773 & 5.3469 & 0.4691 \\
0.4 & 4 & 1 & 4.5983 & 11.0014 & 1.8049 & 6.0950 & 0.3169 \\
0.5 & 4 & 1 & 4.4294 & 10.9313 & 2.0725 & 5.2743 & 0.3128 \\
0.6 & 4 & 1 & 4.8564 & 11.1416 & 1.4288 & \textbf{7.7975} & 0.4894 \\
0.7 & 4 & 1 & 4.5348 & 10.7278 & 1.6583 & 6.4689 & 0.5512 \\
0.8 & 4 & 1 & 4.6063 & 11.0969 & 1.8843 & 5.8890 & 0.5356 \\
0.9 & 4 & 1 & 4.4710 & 10.6511 & 1.7091 & 6.2318 & 0.5560 \\

0.1 & 6 & 1 & 4.2789 & 10.7116 & 2.1539 & 4.9730 & 0.5346 \\
0.2 & 6 & 1 & 4.4408 & 10.6786 & 1.7970 & 5.9424 & 0.4597 \\
0.3 & 6 & 1 & 4.3819 & 10.7248 & 1.9611 & 5.4685 & 0.2552 \\
0.4 & 6 & 1 & 4.4264 & 10.8212 & 1.9685 & 5.4972 & 0.1903 \\
0.5 & 6 & 1 & 4.6104 & 10.7438 & 1.5231 & \textbf{7.0537} & 0.6340 \\
0.6 & 6 & 1 & 4.6102 & 11.0817 & 1.8614 & 5.9534 & 0.4698 \\
0.7 & 6 & 1 & 4.4889 & 10.9303 & 1.9525 & 5.5980 & 0.4735 \\
0.9 & 6 & 1 & 4.5603 & 11.2799 & 2.1593 & 5.2237 & 0.4662 \\

0.1 & 3 & 1 & 4.5901 & 10.8458 & 1.6657 & \textbf{6.5110} & 0.2792 \\
0.2 & 3 & 1 & 4.5164 & 10.7111 & 1.6784 & 6.3813 & 1.0052 \\
0.3 & 3 & 1 & 4.3591 & 10.6001 & 1.8820 & 5.6322 & 1.1817 \\
0.4 & 3 & 1 & 4.2705 & 10.8034 & 2.2624 & 4.7750 & 1.4596 \\
0.5 & 3 & 1 & 4.2802 & 10.7270 & 2.1666 & 4.9510 & 0.9133 \\
0.6 & 3 & 1 & 4.6677 & 10.7973 & 1.4619 & \textbf{7.3853} & 0.7994 \\
0.7 & 3 & 1 & 4.4107 & 10.8944 & 2.0731 & 5.2549 & 0.7168 \\
0.9 & 3 & 1 & 4.2697 & 10.7986 & 2.2592 & 4.7797 & 1.2165 \\

0.1 & 5 & 1.5 & 4.4078 & 11.0426 & 2.2270 & 4.9584 & 0.4940 \\
0.2 & 5 & 1.5 & 4.4804 & 11.1487 & 2.1879 & 5.0954 & 0.9179 \\
0.3 & 6 & 1.5 & 4.4765 & 10.9265 & 1.9736 & 5.5360 & 0.7350 \\
0.4 & 4 & 1.5 & 4.5213 & 10.9068 & 1.8643 & 5.8501 & 0.6820 \\
0.5 & 3 & 1.5 & 4.4814 & 10.7782 & 1.8155 & 5.9367 & 0.7641 \\
0.6 & 6 & 1.5 & 4.4142 & 10.8790 & 2.0506 & 5.3051 & 0.8041 \\
0.7 & 5 & 1.5 & 4.7094 & 11.1666 & 1.7479 & 6.3885 & 0.4343 \\
0.9 & 3 & 1.5 & 4.5167 & 10.8666 & 1.8333 & 5.9273 & 0.8385 \\

0.1 & 5 & 0.5 & 4.4711 & 11.0455 & 2.1034 & 5.2512 & 0.4997 \\
0.2 & 6 & 0.5 & 4.3874 & 10.5801 & 1.8054 & 5.8600 & 0.8815 \\
0.3 & 5 & 0.5 & 4.4525 & 10.9303 & 2.0254 & 5.3965 & 0.9853 \\
0.4 & 4 & 0.5 & 4.5588 & 11.0274 & 1.9099 & 5.7736 & 0.5955 \\
0.5 & 3 & 0.5 & 4.6351 & 10.9098 & 1.6397 & \textbf{6.6535} & 0.5750 \\
0.6 & 6 & 0.5 & 4.7848 & 10.8874 & 1.3178 & \textbf{8.2618} & 0.8853 \\
0.7 & 5 & 0.5 & 4.2007 & 10.6391 & 2.2378 & 4.7540 & 1.0085 \\
0.8 & 4 & 0.5 & 4.6216 & 11.1585 & 1.9153 & 5.8259 & 0.9651 \\

\hline

\end{tabular}
\caption{Performance of Bayesian Fuzzy Optimization on fuzzy mean-variance portfolio allocation problem over Indian National Stock Exchange data for different values of hyper-parameters}
\label{tab:BFO_MVPO}
\end{center}
\end{table}

A detailed sensitivity analysis is presented in Table (\ref{tab:BFO_MVPO}), where the performance of BFO is evaluated across different hyperparameter settings, including the scalarization constant ($\lambda$), number of initial observations ($n_0$), and kernel length scale ($l$).

From the table, it can be observed that the proposed method consistently achieves high mean objective values across a wide range of hyperparameter combinations, demonstrating strong robustness. For financial enthusiasts, the Sharpe ratio is used as an additional performance measure together with risk and returns, and it can be seen that they attain significantly high values, indicating an effective balance between return and risk.

Additionally, the total error $E_{total}$ remains relatively low for most configurations, confirming the accuracy of the predicted fuzzy values throughout the optimization process. Even with varying hyperparameters, the performance degradation is minimal, suggesting that the method is not highly sensitive to hyperparameter tuning. $E_{total}$ represents the total average distances (as defined in Equation \ref{eq:distance}) between the predicted fuzzy values and actual fuzzy values fed to the models in the iterations of the training process of the optimization method.

Figure (\ref{fig:risk_return_curve}) shows the relationship between fuzzy returns and fuzzy risks. The curve highlights the trade-off between return and risk, and it can be observed that the solutions obtained by the proposed method lie in the efficient region, demonstrating its capability to identify optimal portfolios under uncertainty.
\begin{figure}[h]
    \centering
        \includegraphics[width=0.5\textwidth]{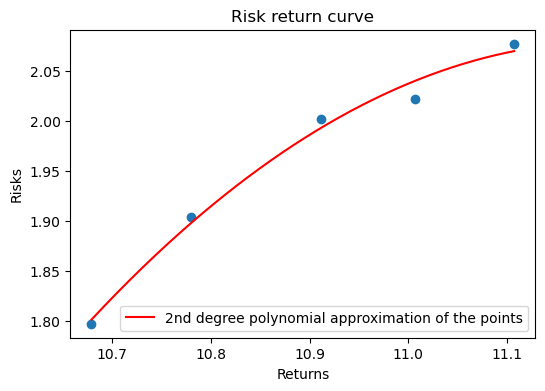}
         \caption{Return vs Risk measure curves for fuzzy mean-variance portfolio allocation problem on Indian National Stock Exchange data}
         \label{fig:risk_return_curve}
\end{figure}
\subsection{Climate data application}

In the second application, the objective function representing temperature is treated as a completely unknown fuzzy-valued function. Only a limited number of observations are provided to the model, and the task is to identify the optimal value. And also test the performance of the proposed Fuzzy Gaussian Process technique.

\subsubsection{Regression performance}

Table (\ref{tab:regression_results}) presents the comparison of the proposed fuzzy Gaussian process (FGP) with several standard regression models.
\begin{table}[h]
\centering
\caption{Regression performance comparison on temperature dataset}
\label{tab:regression_results}
\begin{tabular}{lcc}
\toprule
Method & RMSE & MAE \\
\midrule
FGP & 0.2421 & 0.2375 \\
GP  & 0.2574 & 0.2566 \\
SVR & 0.5817  & 0.5451  \\
RF  & 0.3775  & 0.3280  \\
GBR & 0.3889  & 0.3402  \\
MLP & 0.5669  & 0.5516  \\

\bottomrule
\end{tabular}
\end{table}
It can be observed that FGP achieves the lowest RMSE (0.2421) and MAE (0.2375) among all methods. It also outperforms the classical Gaussian Process (GP), demonstrating the advantage of incorporating fuzziness in addition to stochastic modeling.

Tree-based models such as Random Forest (RF) and Gradient Boosting Regression (GBR) perform reasonably well but are inferior to FGP, as they do not explicitly model uncertainty. Support Vector Regression (SVR) and Multi-Layer Perceptron (MLP) show comparatively weaker performance, likely due to their sensitivity to hyperparameters and limited capability in capturing smooth temporal trends.

The closeness of RMSE and MAE values for FGP indicates that the prediction errors are uniformly distributed without significant outliers, confirming the stability of the model.

\subsubsection{Optimal value prediction}

The sensitivity analysis results of predicting the optimal temperature values are shown in Table (\ref{tab:BFO_Temp}). It can be observed that in most cases, the predicted optimal value matches the actual optimal value (mean temperature $\approx 26.2$).
\begin{table}
\begin{center}
\scriptsize
\begin{tabular}{l|l|l|l|l}
\hline
$\beta$ & $n_0$ & $l$ & Predicted optimal & $E_{total}$ \\
\hline

0.3 & 5 & 1 & 26.05 & 1.04798 \\
0.5 & 5 & 1 & 26.20 & 1.15948 \\
0.7 & 5 & 1 & 26.20 & 0.72242 \\
0.8 & 5 & 1 & 26.04 & 0.41247 \\
0.9 & 5 & 1 & 26.20 & 1.27134 \\

0.4 & 4 & 1 & 26.20 & 1.84346 \\
0.3 & 4 & 1 & 26.20 & 0.47407 \\
0.5 & 4 & 1 & 26.05 & 0.75905 \\
0.6 & 4 & 1 & 26.05 & 0.75153 \\
0.7 & 4 & 1 & 26.20 & 0.61423 \\

0.7 & 3 & 1 & 25.62 & 0.61245 \\
0.5 & 3 & 1 & 26.20 & 0.52443 \\
0.4 & 3 & 1 & 26.20 & 0.65698 \\
0.3 & 3 & 1 & 26.04 & 0.74875 \\
0.2 & 3 & 1 & 26.20 & 0.47852 \\

0.2 & 2 & 1 & 26.05 & 0.57672 \\
0.3 & 2 & 1 & 26.05 & 0.46943 \\
0.4 & 2 & 1 & 26.20 & 0.79756 \\
0.5 & 2 & 1 & 26.20 & 1.17616 \\
0.6 & 2 & 1 & 26.20 & 0.73133 \\
0.8 & 2 & 1 & 26.20 & 0.74655 \\

0.8 & 6 & 1 & 26.20 & 0.73022 \\
0.7 & 6 & 1 & 26.20 & 0.90347 \\
0.5 & 6 & 1 & 26.20 & 0.74012 \\
0.4 & 6 & 1 & 26.20 & 1.51188 \\

0.4 & 5 & 0.5 & 26.20 & 0.90865 \\
0.6 & 5 & 0.5 & 26.05 & 1.38151 \\
0.8 & 5 & 0.5 & 26.20 & 1.05826 \\
0.2 & 5 & 0.5 & 26.20 & 1.42746 \\

0.2 & 4 & 0.5 & 26.05 & 1.00404 \\
0.5 & 4 & 0.5 & 25.65 & 0.52515 \\
0.7 & 4 & 0.5 & 26.20 & 0.67960 \\

0.8 & 3 & 0.5 & 26.05 & 0.66047 \\
0.6 & 3 & 0.5 & 26.20 & 0.88473 \\
0.5 & 3 & 0.5 & 26.20 & 1.21987 \\

0.4 & 2 & 0.5 & 26.20 & 0.92328 \\
0.2 & 2 & 0.5 & 26.05 & 0.49910 \\
0.5 & 3 & 0.5 & 26.05 & 0.60225 \\

0.7 & 2 & 1.5 & 25.45 & 0.19445 \\
0.5 & 2 & 1.5 & 25.61 & 0.39784 \\

0.4 & 3 & 1.5 & 26.20 & 0.74195 \\
0.6 & 3 & 1.5 & 26.04 & 0.75793 \\
0.8 & 3 & 1.5 & 26.20 & 0.67437 \\

0.8 & 4 & 1.5 & 26.05 & 0.77221 \\
0.6 & 4 & 1.5 & 25.93 & 0.41970 \\
0.3 & 4 & 1.5 & 26.20 & 0.99384 \\

0.3 & 5 & 1.5 & 26.20 & 1.09907 \\
0.5 & 5 & 1.5 & 26.04 & 0.85415 \\
0.7 & 5 & 1.5 & 26.20 & 0.79742 \\

\hline

\end{tabular}

\caption{Performance of Bayesian fuzzy optimization on fuzzy-valued Indian temperature data from 1901 to 2021 for different values of hyper-parameters}
\label{tab:BFO_Temp}
\end{center}
\end{table}
Even when slight deviations occur, the associated error remains small, as indicated by low values of $E_{total}$. The method is able to identify the optimal solution with very few initial observations, demonstrating strong data efficiency.

\subsubsection{Visualization and convergence}
\begin{figure}[h]
    \centering
        \subfigure[]{\includegraphics[width=0.49\textwidth]{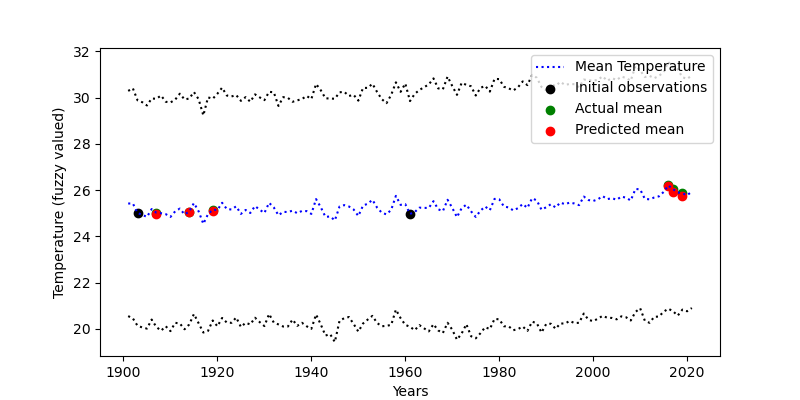}}
        \subfigure[]{\includegraphics[width=0.49\textwidth]{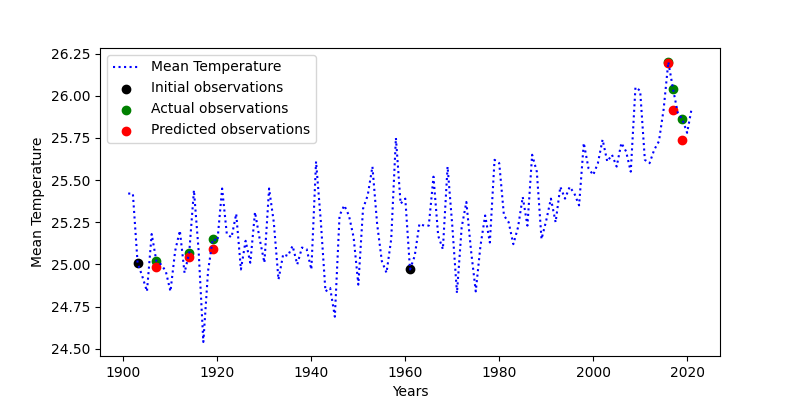}}
         \caption{Performance of the Bayesian fuzzy optimization on Indian temperature data}
         \label{fig:performance_weather}
\end{figure}
Figure (\ref{fig:performance_weather}) shows the predictive performance of the method, where the unknown temperature function is approximated using limited observations and still reaches its maximum value. Figure (\ref{fig:objective_function_prediction}) further shows that the method consistently converges to the optimal solution regardless of the number of initial observations. This indicates a well-balanced exploration-exploitation mechanism and reliable convergence behavior.
\begin{figure}
    \centering
    \subfigure[]{\includegraphics[width=0.49\textwidth]{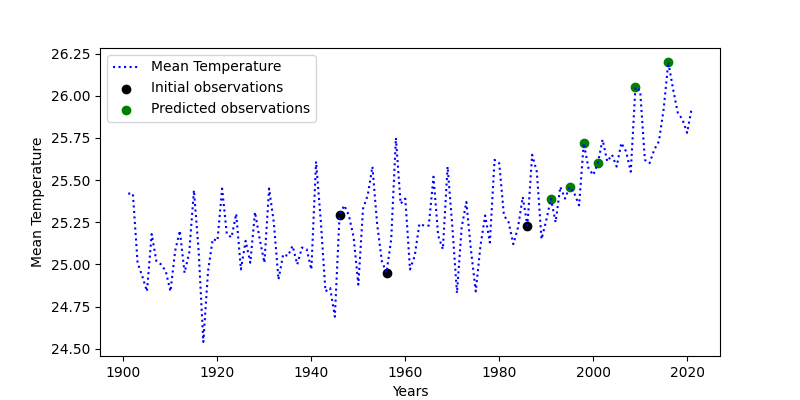}} 
    \subfigure[]{\includegraphics[width=0.49\textwidth]{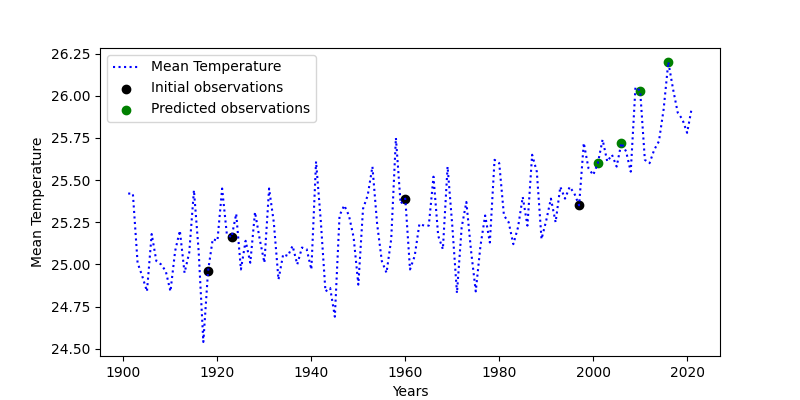}} 
    \subfigure[]{\includegraphics[width=0.49\textwidth]{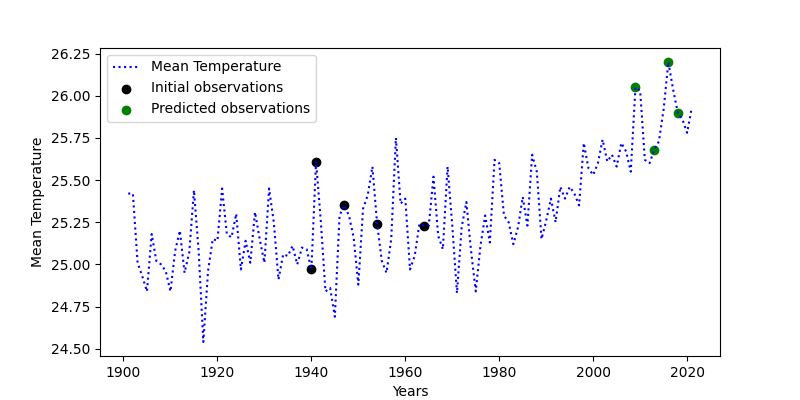}} 
    \subfigure[]{\includegraphics[width=0.49\textwidth]{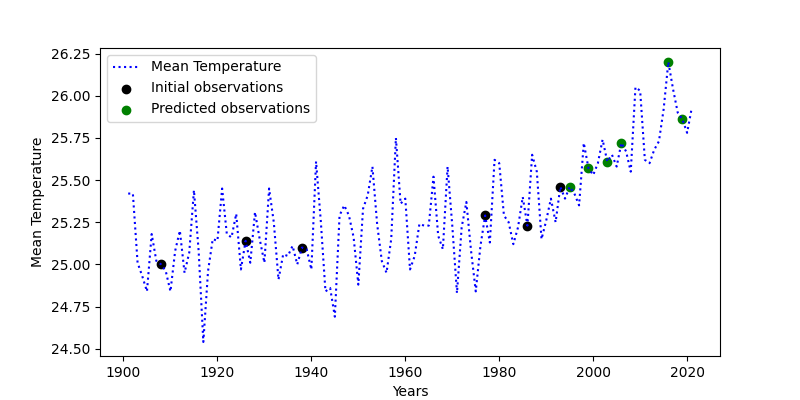}}

    \caption{Performance of Bayesian fuzzy optimization on the Indian annual temperature data (a) for three initial observations, (b) for four initial observations, (c) for five initial observations, (d) for six initial observations}
    \label{fig:objective_function_prediction}
\end{figure}
\subsubsection{Overall discussion}

The experimental results across both applications demonstrate that the proposed Bayesian fuzzy optimization method:

\begin{itemize}
    \item Achieves superior optimization performance compared to existing methods,
    \item Maintains robustness across different hyperparameter settings,
    \item Provides accurate predictions with limited data,
    \item Effectively handles uncertainty arising from both randomness and fuzziness.
\end{itemize}

The consistent performance across financial and climate datasets highlights the versatility and generalizability of the proposed Fuzzy Gaussian Process (FGP) as a regressor and Bayesian Fuzzy Optimization(BFO) as a fuzzy optimization technique. This suggests that the proposed approach can be extended to a wide range of real-world applications involving uncertain, imprecise, and expensive objective functions, such as healthcare optimization, environmental modeling, supply chain systems, and risk management.

\section{Conclusion}\label{conclusion}
This work aimed to explore the integration of both sources of uncertainty, named randomness and imprecision, through fuzzy random variables, with a focus on developing a theoretical framework for Gaussian stochastic processes. Then, proposing a novel fuzzy optimization technique, Bayesian fuzzy optimization, that can optimize a fuzzy-valued objective function. Also, applications of Bayesian fuzzy optimization to different real-world problems. The work developed a theoretical background for Gaussian stochastic processes under a fuzzy environment. An expression for the posterior fuzzy mean and fuzzy variance is derived. It is then operationalized through a novel Bayesian fuzzy optimization technique, where the fuzzy-valued objective function is assumed to follow the Gaussian fuzzy process as prior belief. Then, BFO uses the expressions derived for fuzzy expected improvement and fuzzy upper confidence bound acquisition functions for guiding the search process depending on the posterior fuzzy mean and fuzzy variance. The proposed method is then successfully applied to the fuzzy mean-variance portfolio allocation problem and the analysis and optimization of the fuzzy-valued objective function representing Indian annual temperature, demonstrating strong predictive power, robustness, and versatility. This work also establishes Fuzzy Gaussian Process as a regressor counting on both sources of uncertainty effectively. An empirical demonstration of convergence is provided for the proposed Bayesian fuzzy optimization technique. The findings of this work contribute to a deeper understanding of how fuzzy random variables can be used to model complex systems, offering new tools for decision-making in environments characterized by randomness and imprecision. The success of the method in different domains highlights its potential for broad application in fields such as finance, climate science, healthcare, material science, agriculture, and beyond.
\par This work did not test the performance of the proposed optimization method on high-dimensional application problems, which is actually a limitation for this work as well as for the classical Bayesian optimization method. As the dimension increases, the method can become computationally intensive, incorporating fuzziness into another layer of complexity. The problem of interpretability, i.e, the probabilistic nature of Bayesian approaches, combined with the imprecision of fuzzy logic, can make it challenging to understand how the model reaches its conclusions. Dimension reduction techniques or a trust region-based approach can be adapted to the optimization method to improve the performance of the proposed method for more serious dimensional problems. Despite its limitations, the proposed Bayesian fuzzy optimization technique can be fruitful in various real-world domains where uncertainty arises from both randomness and fuzziness, and the fuzzy-valued objective functions are computationally intensive or lack mathematical expressions.

\bibliographystyle{plain}
\bibliography{reference}
\end{document}